\tiny\color{gray},  
\crefname{hypothesis}{Hypothesis}{Hypotheses}
\title{A smoothing Anderson acceleration algorithm for nonsmooth fixed point problem with linear convergence\thanks{Submitted to the editors {December 6, 2024}.
\funding{This work was funded by the National Natural Science Foundation of China (project number: 12425115, 12271127, 62176073).}}}
\author{Zekai Li\thanks{Department of Mathematics, Harbin Institute of Technology, Harbin, China (\email{zeeklc\_op@163.com}).}
\and Wei Bian\thanks{Corresponding author. Department of Mathematics, Harbin Institute of Technology, Harbin, China (\email{bianweilvse520@163.com}).}}
\newcommand*{\addFileDependency}[1]{
  \typeout{(#1)}
  \@addtofilelist{#1}
  \IfFileExists{#1}{}{\typeout{No file #1.}}
}
\renewcommand{\u}{\bm{u}}
\newcommand{\A}{\bm{A}}
\newcommand{\B}{\bm{B}}
\newcommand{\I}{\bm{I}}
\renewcommand{\b}{\bm{b}}
\renewcommand{\v}{\bm{v}}
\newcommand{\p}{\bm{p}}
\newtheorem{example}{\bf Example}[section]
\newtheorem{assumption}{Assumption}[section]
\begin{document}

\maketitle

\begin{abstract}
    In this paper, we consider the Anderson acceleration method for solving the contractive fixed point problem, which is nonsmooth in general. We define a class of smoothing functions for the original nonsmooth fixed point mapping, which can be easily formulated for many cases (see \cref{section3}). Then, taking advantage of the Anderson acceleration method, we proposed a Smoothing Anderson(m) algorithm, in which we utilized a smoothing function of the original nonsmooth fixed point mapping and update the smoothing parameter adaptively. In theory, we first demonstrate the r-linear convergence of the proposed Smoothing Anderson(m) algorithm for solving the considered nonsmooth contractive fixed point problem with r-factor no larger than $c$, where $c$ is the contractive factor of the fixed point mapping. Second, we establish that both of the Smoothing Anderson(1) and the Smoothing EDIIS(1) algorithms are q-linear convergent with q-factor no larger than $c$. Finally, we present three numerical examples with practical applications from elastic net regression, free boundary problems for infinite journal bearings and non-negative least squares problem to illustrate the better performance of the proposed Smoothing Anderson(m) algorithm comparing with some popular methods.
\end{abstract}

\begin{keywords}
    nonsmooth fixed point problem, Anderson acceleration, smoothing method, r-linear convergence, q-linear convergence.
\end{keywords}
\begin{AMS}
65H10, 49J52, 68W25
\end{AMS}

\section{Introduction}
In this paper, we consider the following fixed point problem 
\begin{equation}\label{eq1-0}
\u=G(\u),
\end{equation}
where $G: \mathbb{R}^n\rightarrow \mathbb{R}^n$ is Lipschitz continuous but not necessarily smooth. 
We assume that $G$ has fixed points on a closed set $D\subset\mathbb{R}^n$ and $G$  is a contraction mapping on $D$ with factor $c\in(0,1)$ in the Euclidean norm $\|\cdot\|$, i.e., 
\begin{equation}\label{c}
\|G(\u)-G(\v)\|\le c \| \u-\v \|, \quad \forall \, \u,\v \in D.
\end{equation}
By the contraction mapping theorem \cite{Ortega}, this function $G$ has a unique fixed point $\u^*\in D$, which is the unique solution of the nonlinear equations
\[
F(\u):=G(\u)-\u=0.
\]
We denote the residual of \cref{eq1-0}   at $\u$ by $F(\u)$ and it satisfies
\begin{equation}\label{eqF1}
(1-c)\|\u-\u^*\|\leq \|F(\u)\| \leq (1+c)\|\u-\u^*\|,\quad \forall \u\in D.
\end{equation}

Fixed point problems exist in many fields and have a wide range of applications in natural science and social science. Many practical problems can be transformed into the fixed point problem \cref{eq1-0} by appropriate techniques, such as the generalized absolute value equation (GAVE) and iterative shrinkage-thresholding algorithm (ISTA). Anderson acceleration method is an essential technique for solving fixed point problems \cref{eq1-0}. D. Anderson \cite{Anderson} first proposed this method for resolving integral equations in 1965. 
Since Anderson acceleration doesn't require the gradient information in iterations, it performs efficiently in numerical calculations for practical applications. As a result, it is widely used in electronic structure computations \cite{Anderson,ChenKelley2015,TothKelley2015,Walker}, geometry optimization problems \cite{peng2018anderson} and machine learning \cite{wei2021class}. 

\begin{algorithm*}[t]
    \caption{Anderson(m)}\label{algo-o}
    Choose $\u_0\in D$ and a positive integer $m$. Set $\u_1=G(\u_0)$ and $F_0=G(\u_0)-\u_0$. \\
    \textbf{for $k=1,2,...$ do}
    
    \qquad  set $F_k=G(\u_k)-\u_k$;
    
    \qquad choose $m_k=\min\{m,k\}$;
    
    \qquad solve
    \begin{equation}\label{Anderson_alpha}
    \min\,\,\left \|\sum_{j=0}^{m_k}\alpha_j{F}_{k-m_k+j}\right \|\quad
    \mbox{s.t.}\,\sum_{j=0}^{m_k}\alpha_j=1
    \end{equation}
    \qquad to find a solution $\{\alpha_j^k:j=0,\ldots,m_k\}$, and set
    \[
    \u_{k+1}=\sum_{j=0}^{m_k}\alpha_j^k{G}(\u_{k-m_k+j});\]
    \textbf{end for}
\end{algorithm*}

The core of Anderson acceleration is to use the information of historical iteration points and update the next iterate by a combination of the previous iterates. At iteration $k$, with the maximum allowable algorithmic depth parameter $m$ and the algorithmic depth parameter $m_k$, Anderson(m) algorithm is described as in \cref{algo-o}, which stores the function values of $G(\u)$ and $F(\u)$ at $\u_{k-m+j}$, $j=0,\ldots, m_k$. We can get a set of combination coefficients by solving a linear constrained quadratic convex optimization problem modeled in \cref{Anderson_alpha}. Using the obtained optimal combination coefficients, Anderson(m) algorithm aims to define the new iterate $\u_{k+1}$ by the linear combination of the function values at the last $m_k+1$ iterations. Obviously, Anderson(0) is the Picard method, i.e.,
\begin{equation}\label{Picard}
\u_{k+1}=G(\u_k).
\end{equation}
It has q-linear convergence rate, that is
\begin{equation}\label{q-linear}
\|\u_{k+1}-\u^*\|\le c\|\u_k-\u^*\|
\end{equation}
holds in a ball ${\cal B}(\rho):=\{\u\in D: \|\u-\u^*\|\leq \rho\}$ with $\rho>0$. Some researchers proposed some variations of Anderson(m) algorithm by modifying the constraints in \cref{algo-o}. For example, by adding the non-negative constraints in \cref{Anderson_alpha}, Anderson(m) algorithm is transformed to EDIIS(m) algorithm, for which  \cref{Anderson_alpha} is replaced by a new minimization problem as follows: 
\begin{equation}\label{EDIIS_alpha}
\min\,\,\left \|\sum_{j=0}^{m_k}\alpha_j{F}_{k-m_k+j}\right \|\quad
\mbox{s.t.} \,\sum_{j=0}^{m_k}\alpha_j=1, \,  \alpha_j\ge 0,\quad  j=0,\ldots, m_k.
\end{equation}

Although Anderson acceleration has achieved great success due to its efficient numerical performance, there are still few results on its convergence analysis. The first mathematic convergence results for Anderson(m) algorithm were proposed in 2015 by Toth and Kelley \cite{TothKelley2015} for both linear and nonlinear problems. They proved the r-linear convergence of Anderson(m) algorithm with r-factor $\hat{c}\in (c,1)$ when $G$ is Lipschitz continuously differentiable and the linear combination coefficients remain bounded. Additionally, they established the r-linear convergence of Anderson(1) algorithm without the condition of coefficients. On this basis, Chen and Kelley \cite{ChenKelley2015} demonstrated the global r-linear convergence of EDIIS(m) algorithm without the differentiability of $G$, and its r-factor is $\hat{c}=c^{1/(m+1)}$. 
Wei et al \cite{wei2021class} proposed a novel short-term recurrence variant of Anderson acceleration to reduce the memory overhead. This research also analyzed the convergence properties and demonstrated the local r-linear convergence under the assumption that $G$ is Lipschitz continuously differentiable. Furthermore, for a class of integral equations, in which the operator can be decomposed into smooth and nonsmooth parts in a Hilbert space, Bian et al \cite{BCK} showed the convergence results of Anderson(m) and EDIIS(m) algorithms, not only for the r-linear convergence with r-factor $\hat{c}\in (c,1)$ when $m\geq 1$, but also for the q-linear convergence with q-factor $c$ when $m=1$. 
Evans et al \cite{evans2020proof} proposed a one-step analysis of Anderson acceleration for contractive mappings, demonstrating that Anderson acceleration can actually improve the convergence rate with the introduction of higher order terms. Moreover, Pollock and Rebholz \cite{pollock2021anderson} further extended the convergence results to some noncontractive mappings, which provided insight into the balance between the contributions of higher and lower order terms.

Considering the classical Anderson acceleration may suffer from stagnation and instability, various techniques have been developed to address these challenges. Zhang et al \cite{zhang2020globally} studied a new variant of Anderson acceleration with safeguarding steps and a restart checking strategy for the general fixed point problem which is potentially nonsmooth. Once the iteration point satisfies certain conditions, then the algorithm accepts an Anderson acceleration step. Although they proved the global convergence of residual for this algorithm, they did not obtain the relevant convergence rate. Another approach is to introduce regularization terms in \cref{Anderson_alpha}. Ouyang et al \cite{ouyang2023nonmonotone} adopted a nonmonotone trust-region framework with an adaptive quadratic regularization, and designed a globalization for Anderson acceleration by implementing a reasonable acceptance mechanism. They proved that the algorithm is globally convergent in terms of residuals for a class of nonexpansive mappings and exhibits local r-linear convergence for contractive mappings. So far, although some convergence results have been established, some of these studies fail to demonstrate the linear convergence factor, some only obtain the local convergence, and some require some certain smoothness conditions to guarantee the convergence. These analyses primarily focus on the convergence behaviour of Anderson acceleration, but there are still many theoretical results from other perspectives. Ouyang et al \cite{ouyang2024descent} investigated the local properties of Anderson acceleration with restarting (AA-R) in terms of function values. They demonstrated that {AA-R} is a local descent method, achieving a more significant reduction on the objective function values compared to the basic gradient methods. Also, they proposed a global strategy for {AA-R}. Additionally, there are numerous significant results on damping iterations, which can be expressed in the following form: 
\begin{equation*}
\u_{k+1}=(1-\beta_k)\sum_{j=0}^{m_k}\alpha_j^k \u_{k-m_k+j} + \beta_k\sum_{j=0}^{m_k}\alpha_j^k{G}(\u_{k-m_k+j}),
\end{equation*}
where $\beta_k\in(0,1]$ is a damping parameter. Evans et al \cite{evans2020proof} focused on the effect of damping parameters on Anderson acceleration, examining both fixed and adaptive damping parameters. Their numerical experiments demonstrated that the adaptive damping parameter can enhance convergence stability and robustness for noncontractive mappings with the variation of $m$. Followed by this theory, Chen and Vuik \cite{chen2024non} extended the research by proposing a new variant of Anderson acceleration that minimizes the residual at each iteration through optimized damping. Their experiments illustrated that Anderson acceleration with optimized damping procedure often converges significantly faster than Anderson acceleration with fixed damping, adaptive damping or without damping, especially when the depth $m$ is large. Although these two studies indicated that selecting appropriate damping parameters can improve convergence, there remains a lack of consistent theoretical results supporting these empirical findings. All of these pose challenges for the research of Anderson acceleration. In this paper, we focus on solving the nonsmooth contractive mappings \eqref{eq1-0} by Anderson(m) algorithm and analyzing its convergence results.


Smoothing approximation methods have been studied for decades and play an important role in numerous applications. Smoothing methods are primarily characterized by their capability to approximate nonsmooth functions using parameterized smooth functions, with the flexibility to update the smoothing parameters as the algorithm iterates. As a result, we can approximate the original nonsmooth problem by many smooth problems with different smoothing parameters, so it can be transformed into a series of smooth problems, for which there exist many efficient methods. 
With the smoothing approximation method, Bian and Chen \cite{bian2022anderson} proposed a new smoothing Anderson acceleration for the contractive fixed point problem composited with `max' operator. They proved its local r-linear convergence with factor $c$ for the problem, which is the same as the convergence rate of Anderson acceleration for the continuously differentiable case. Additionally, they showed that Anderson(1) algorithm is q-linear convergent with a new q-factor, which is strictly smaller than the q-factor given in \cite{BCK, TothKelley2015}. Compared to the results in \cite{bian2022anderson}, our study investigates a broader class of nonsmooth contractive mappings, which have a smoothing function satisfying some conditions, rather than being restricted to the particular case composited with `max' operator. Moreover, we propose a novel Anderson acceleration algorithm with updated smoothing parameters and local r-linear convergence of factor $c$, which is just the contractive factor of $G$.

The contributions of this paper are new convergence results on Anderson(m) and EDIIS(m) algorithms for nonsmooth contractive fixed point problem \cref{eq1-0} by using the smoothing function of mapping $G$. Not only does it obtain a possibly better convergence factor than it in \cite{BCK}, but also it further weakens the condition on the differentiability in \cite{TothKelley2015}. In \cref{section2}, inspired by the convergence analysis of Anderson(m) algorithm for a continuously differentiable function $G$ and the smoothing approximation method, we introduce a Smoothing Anderson(m) algorithm and demonstrate its local r-linear convergence for problem \cref{eq1-0} with factor ${c}$, which is same as the contraction factor of $G$.
Here, we would like to stress that we need put some conditions on the used smoothing function for $G$ to guarantee this convergence result, where all the conditions are put in the definition of smoothing function for easy reading. Correspondingly, we establish that both Smoothing Anderson(1) and Smoothing EDIIS(1) algorithms are q-linear convergent for problem \cref{eq1-0} with factor ${c}$. In \cref{section3}, we present three actual applications and give the corresponding numerical examples from elastic net regression, free boundary problems for infinite journal bearings and non-negative least squares problem, to illustrate our theoretical results. Additionally, it is of significance to explain how these problems can be reformulated as nonsmooth fixed point problems and how to appropriately construct a smoothing function. Preliminary numerical results show that the proposed Smoothing Anderson(m) algorithm has more efficient numerical performance than other methods for the nonsmooth fixed point problems.

\section{Smoothing Anderson(m) and its local convergence}\label{section2}
In this section, we propose a variant of Anderson(m) combined with the smoothing method, denoted by Smoothing Anderson(m) and illustrated in \cref{algo-s}. In subsection \ref{section2.2}, we prove the r-linear convergence of Smoothing Anderson(m) to
the solution of \cref{eq1-0} with factor ${c}$, which is the same as the contraction factor of $G$. In subsection \ref{section2.3}, we give some analysis on the q-linear convergence of Smoothing Anderson(1).
Here, we need a smoothing function ${\mathcal{G}}(\u,\mu)$ of $G$ defined in \cref{ass3}, for which we will use some examples and practical problems to illustrate its feasibility in section \ref{section3}.

\begin{algorithm}[t]
    \caption{Smoothing Anderson(m)}\label{algo-s}
    Choose $\u_0\in D$ and set $\mu_0=\sqrt{\|F(\u_0)\|}$.

    \textbf{for $k=0,1,2,...$ do}

    \qquad \textbf{while $\|F(\u_k)\|\neq 0$}
    
    \qquad \quad choose $m_k=\min\{m,k\}$;
    
    \qquad \quad calculate $\mathcal{G}_{k} = \mathcal{G}(\u_k, \mu_k)$;
    
    \qquad \quad set ${\mathcal{F}}_{k}= {\mathcal{G}}_{k}-\u_{k};$
    
    \qquad  \quad solve $\{\alpha_j^k:j=0,\ldots,m_k\}$ by
    \begin{equation}\label{s-anderson-m}
    \min\,\,\left\|\sum_{j=0}^{m_k}\alpha_j^k{\mathcal{F}}_{k-m_k+j}\right\|\quad
    \mbox{s.t.}\,\sum_{j=0}^{m_k}\alpha_j^k=1;
    \end{equation}
    \qquad \quad set
    \begin{equation}\label{eq-u-s}
    \u_{k+1}=\sum_{j=0}^{m_k}\alpha_j^k{\mathcal{G}}_{k-m_k+j},\,\,\mu_{k+1}=\frac{1}{\sqrt{\|F(\u_0)\|}}\max_{0\leq j\leq m_k}\|{\mathcal{F}}_{k-m_k+j}\|;
    \end{equation}
    \qquad \textbf{end while}

    \textbf{end for}
\end{algorithm}

\begin{remark}\label{remark1}
    Actually, when $k=0$ and $m_k = 0$, $\{\alpha_j^k:j=0,\ldots,m_k\}$ is a singleton $\{1\}$, which means that \cref{eq-u-s} is equivalent to a similar Picard iteration for $k = 0$, i.e., $\u_1=\mathcal{G}_0$ and $\mu_1=\frac{1}{\sqrt{\|F(\u_0)\|}}\|{\mathcal{F}}_{0}\|$.
\end{remark}

\begin{definition}\label{ass3}
We call function ${\mathcal{G}}:\mathbb{R}^n\times(0,\tilde{\mu}]\rightarrow\mathbb{R}^n$ with $\tilde{\mu}>0$ a smoothing function of $G$ which is Lipschitz continuous on ${\cal B}(\rho)\subset D$ with $\rho>0$, if it is Lipschitz continuously differentiable with respect to $\u$ for any fixed $\mu\in(0,\tilde{\mu}]$, and there exist $\delta\leq\rho$ and $\hat{\mu}\in(0,\tilde{\mu}]$ such that it owns the following properties.
\begin{itemize}
\item [{\rm (i)}] There exists $\kappa>0$ such that
\[\|{\mathcal{G}}(\u,\mu)-G(\u)\|\leq\kappa\mu,\quad \forall \u \in\mathcal{B}(\delta),\;\mu\in(0,\hat{\mu}].\]
\item [{\rm (ii)}] There exists a bounded continuous function $\tau$ defined on $[0,\hat{\mu}]$ with $\tau(0)={c}$ and $\tau({\mu})\leq\bar{c}<1$, $\forall\mu\in[0,\hat{\mu}]$ such that ${\mathcal{G}}(\u,\mu)$ is a contractive mapping on $\mathcal{{B}}(\delta)$
    with factor $\tau(\mu)$ for any fixed $\mu\in(0,\hat{\mu}]$.
\item [{\rm (iii)}]
   ${\mathcal{G}}(\u^*,\mu)=G(\u^*)$ and ${\mathcal{G}}'(\u^*,\mu)={\mathcal{G}}'(\u^*,\nu)$, $\forall \mu,{\nu}\in(0,\hat{\mu}]$.
\item [{\rm (iv)}] There exists $\beta>0$ such that it holds
\[\mathcal{G}(\u,\mu)=\mathcal{G}(\u^*,\mu)+\mathcal{G}'(\u^*,{\mu})(\u-\u^*)+\Delta_{\u,\mu},\quad \forall \u \in \mathcal{B}(\delta),\, \mu\in(0,\hat{\mu}],\]
 where $\|\Delta_{\u,\mu}\| \leq{\beta\|\u-\u^*\|^2}/{2\mu}$.
\end{itemize}
\end{definition}

The contraction factor of $\mathcal{G}$ depends on the structure of the proposed smoothing function. Compared with the Smoothing Anderson(m) with a fixed smoothing parameter, \cref{algo-s} is likely to have a better convergence factor, which is same as the contractive factor of $G$.
For a Lipschitz continuously differentiable and contractive function $Q : \mathbb{R}^n \rightarrow \mathbb{R}^n$, 
we can give a smoothing function of $G(\u) = \max\{Q(\u), 0\}$ as an example to satisfy Definition \ref{ass3}.
\begin{example}\label{ex1}
A smoothing function of $\max\{t, 0\}$ can be defined as follows: 
for any $t\in\mathbb{R}$ and $\mu\in(0,\tilde{\mu}]$, let
\begin{equation}\label{sm1}
    \phi(t,\mu)=\left\{
    \begin{aligned}
    &0&&\mbox{if $t<0$}\\
    &{t^2}/{2\mu}&&\mbox{if $0\leq t\leq\mu$}\\
    &(t-\mu)^2/4+t-\mu/2&&\mbox{if $\mu<t\leq\mu+\sqrt{\mu}$}\\
    &-(t-\mu-2\sqrt{\mu})^2/4+t&&\mbox{if $\mu+\sqrt{\mu}\leq t\leq\mu+2\sqrt{\mu}$}\\
    &t&&\mbox{if $t>\mu+2\sqrt{\mu}$.}\\
    \end{aligned}
    \right.
\end{equation}
Using $\phi$ in (\ref{sm1}), we set $\mathcal{G}(\u,\mu)=\Phi(Q(\u),\mu)$, 
where $\Phi(\v,\mu)=(\phi(v_1,\mu), \ldots,\phi(v_n,\mu))^T$ for $\v=(v_1, \ldots, v_n)^T\in\mathbb{R}^n$. In \cite{bian2022anderson}, it has been proved that $\mathcal{G}(\u, \mu)$ is a smoothing function of $G:=\max\{Q(\u),0\}$ satisfying the conditions in \cref{ass3} with $\tilde{\mu} < +\infty$ and $\hat{\mu} = \min\{\hat{\mu}, 4(1-c)^2/c^2, (\varrho/3)^2, 1\}$, where $\varrho=\min\{Q_i(\u^*): Q_i(\u^*)>0, i=1,2, \dots, n\}$. 
\end{example}

Throughout this paper, we denote $\mathcal{G}$ a smoothing function of $G$ defined in \cref{ass3} and
denote $\mathcal{F}$ the smoothing residual function given by
$$\mathcal{F}(\u,\mu)=\mathcal{G}(\u,\mu)-\u.$$
In what follows, when it is clear from the context, the Jacobian of $\mathcal{G}(\u,\mu)$ with respect to $\u$ is simply denoted by $\mathcal{G}'(\u,\mu)$.
For the sake of simplicity, we denote $\mathcal{G}(\u_k,\mu_k)$ and $\mathcal{F}(\u_k,\mu_k)$ by $\mathcal{G}_k$ and $\mathcal{F}_k$, respectively. Same as Anderson(m), we only use the function values of ${\mathcal{G}}(\u,\mu)$ at the current iteration. Different from it, we need update $\mu_k$ at each iteration. 

Moreover, we notice that the exact values of $\hat{\mu}$, $\delta$, $\kappa$ and $\beta$ in \cref{ass3} are only used in the analysis, but not needed in the implementation of Smoothing Anderson(m).

From \cref{ass3}-(ii) and (iii), we have
\begin{equation}\label{eqFS1}
(1-\tau({\mu}))\|\u-\u^*\|\leq\|\mathcal{F}(\u,\mu)\|\leq(1+\tau({\mu}))\|\u-\u^*\|,\quad\forall \u\in \mathcal{B}(\delta),\,\mu\in(0,\hat{\mu}].
\end{equation}

\subsection{r-linear convergence of Smoothing Anderson(m) with \texorpdfstring{$\mathbf{m \geq 1}$}{}}\label{section2.2}

We want state that the optimality condition of $\{\alpha_j^k\}$ for optimization problem \cref{s-anderson-m} is not used in the proof. Indeed, we only need the following conditions on them, which are same as the conditions in \cite{ChenKelley2015,TothKelley2015}.
\begin{assumption}\label{ass2}$ $
\begin{itemize}
\item $\|\sum\nolimits_{j=0}^{m_k}\alpha_j^k{\mathcal{F}}_{k-m_k+j}\|\leq\|{\mathcal{F}}_k\|$;
\item $\sum\nolimits_{j=0}^{m_k}\alpha_j^k=1$;
\item there exists a constant $M_{\alpha}\geq 1$ such that $\sum_{j=0}^{m_k}|\alpha_j^k|\leq M_{\alpha}$ holds for all $k\geq1$.
\end{itemize}
\end{assumption}

To prove the r-linear convergence of residual function values $\|F(\u_k)\|$, we need give some preliminary results for easily reading the proof of the main result.
\begin{lemma}\label{lemma-uk}
Suppose \cref{ass2} holds and
\begin{equation}\label{eq3}
\|{\mathcal{F}}_k\|\leq\|{\mathcal{F}}_0\|,\quad \forall 0\leq k\leq K.
\end{equation}
For the $\delta$ and $\hat{\mu}$ in \cref{ass3}, and any ${\mu}\in(0,\hat{\mu}]$, if $\u_0$ is sufficiently close to $\u^*$, then the following statements hold.
\begin{itemize}
\item [{\rm (i)}] $\mu_k\in(0,{\mu}]$, $\forall 0\leq k\leq K+1$.
\item [{\rm (ii)}] $\u_{k}\in\mathcal{B}(\delta)$, $\forall 0\leq k\leq K+1$.
\item [{\rm (iii)}] $\sum_{j=0}^{m_K}\alpha_j^K\u_{K-m_K+j}\in\mathcal{B}(\delta)$.
\end{itemize}
\end{lemma}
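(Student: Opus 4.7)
The plan is to prove all three statements by a single simultaneous induction on $k$, with the threshold on $\|\u_0-\u^*\|$ chosen so that several inequalities hold at once. The key idea is that the hypothesis (\ref{eq3}) combined with the two-sided bound (\ref{eqFS1}) lets us translate ``$\|\mathcal{F}_k\|$ stays bounded'' into ``$\|\u_k-\u^*\|$ stays bounded'', which is what we ultimately need to keep the iterates in $\mathcal{B}(\delta)$ and keep $\mu_k$ small.

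For the base case $k=0$, I would pick $\u_0$ with $\|\u_0-\u^*\|\le\delta$ so that $\u_0\in\mathcal{B}(\delta)$. Then by (\ref{eqF1}), $\mu_0=\sqrt{\|F(\u_0)\|}\le\sqrt{(1+c)\|\u_0-\u^*\|}$, which is $\le\mu$ provided $\|\u_0-\u^*\|\le\mu^2/(1+c)$. For the inductive step, suppose $\u_j\in\mathcal{B}(\delta)$ and $\mu_j\in(0,\mu]\subset(0,\hat{\mu}]$ for all $0\le j\le k\le K$. Applying (\ref{eqFS1}) at each such index and using the hypothesis (\ref{eq3}) gives
\[
\|\u_j-\u^*\|\le\frac{\|\mathcal{F}_j\|}{1-\tau(\mu_j)}\le\frac{\|\mathcal{F}_0\|}{1-\bar{c}},\qquad 0\le j\le k.
\]
Moreover, (\ref{eqFS1}) at $j=0$ gives $\|\mathcal{F}_0\|\le(1+\bar{c})\|\u_0-\u^*\|$, so both $\|\mathcal{F}_0\|$ and $\max_j\|\u_j-\u^*\|$ can be driven below any prescribed tolerance by shrinking $\|\u_0-\u^*\|$.

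To push the induction from $k$ to $k+1$, I would handle the two quantities separately. For $\mu_{k+1}$, the update in (\ref{eq-u-s}) together with (\ref{eq3}) yields
\[
\mu_{k+1}\le\frac{\|\mathcal{F}_0\|}{\sqrt{\|F(\u_0)\|}}\le\frac{(1+\bar{c})\sqrt{\|\u_0-\u^*\|}}{\sqrt{1-c}},
\]
which is $\le\mu$ once $\|\u_0-\u^*\|\le(1-c)\mu^2/(1+\bar{c})^2$. For $\u_{k+1}$, I would use $\sum_j\alpha_j^k=1$ and $\mathcal{G}(\u^*,\mu)=\u^*$ (Definition \ref{ass3}(iii)) to write
\[
\u_{k+1}-\u^*=\sum_{j=0}^{m_k}\alpha_j^k\bigl(\mathcal{G}(\u_{k-m_k+j},\mu_{k-m_k+j})-\mathcal{G}(\u^*,\mu_{k-m_k+j})\bigr),
\]
then bound each term by the contraction factor $\tau(\mu_{k-m_k+j})\le\bar{c}$ from Definition \ref{ass3}(ii) and use Assumption \ref{ass2} to get $\|\u_{k+1}-\u^*\|\le M_\alpha\bar{c}\,\|\mathcal{F}_0\|/(1-\bar{c})$, which is $\le\delta$ for $\|\u_0-\u^*\|$ sufficiently small. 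Statement (iii) follows by the same argument applied to $\sum_j\alpha_j^K(\u_{K-m_K+j}-\u^*)$, bounded by $M_\alpha\,\|\mathcal{F}_0\|/(1-\bar{c})$.

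The main obstacle, as I see it, is not analytic but bookkeeping: one must resist the temptation to bound $\max_j\|\u_{k-m_k+j}-\u^*\|$ by $\delta$ directly, which would produce the useless estimate $M_\alpha\bar{c}\,\delta$ (possibly larger than $\delta$). Instead, the essential move is to invoke hypothesis (\ref{eq3}) through (\ref{eqFS1}) so that the right-hand side is controlled by $\|\mathcal{F}_0\|$, which in turn shrinks with $\|\u_0-\u^*\|$. Once this is done, taking the final threshold on $\|\u_0-\u^*\|$ as the minimum of the four explicit bounds appearing above (for $\u_0\in\mathcal{B}(\delta)$, $\mu_0\le\mu$, $\mu_{k+1}\le\mu$, and $\u_{k+1}\in\mathcal{B}(\delta)$) completes the induction and proves all three claims uniformly in $k\le K+1$.
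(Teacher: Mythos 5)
Your proposal is correct and follows essentially the same route as the paper: bound $\mu_{k+1}$ by $\|\mathcal{F}_0\|/\sqrt{\|F(\u_0)\|}$ via \cref{eq3} and \cref{eqF1}, translate $\|\mathcal{F}_j\|\le\|\mathcal{F}_0\|$ into $\|\u_j-\u^*\|\le\|\mathcal{F}_0\|/(1-\bar{c})$ via \cref{eqFS1}, and then control $\u_{K+1}$ and the combination in (iii) using $\sum_j\alpha_j^K=1$, $\mathcal{G}(\u^*,\mu)=\u^*$, the contraction factor $\bar{c}$, and the bound $M_\alpha$ from \cref{ass2}. The only difference is presentational: you make the induction on $k$ explicit (which is needed to legitimately apply \cref{eqFS1} at each $\u_j$), whereas the paper leaves it implicit; the explicit threshold $\delta_0$ you assemble matches the paper's choice up to constants.
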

\begin{proof}
Firstly, set
\begin{equation}\label{u0}
\|\u_0-\u^*\|\leq\delta_0:=\min\left\{
\frac{(1-c)^2{\mu}^2}{4(1+c)},\frac{(1-\bar{c})\delta}{2M_{\alpha}}\right\}.
\end{equation}
By \cref{ass3} and ${\mu}\leq\hat{\mu}$, if $\mu_k\in(0,{\mu}]$, ${\mathcal{G}}(\u,\mu_k)$ is a contractive mapping on $\mathcal{B}(\delta)$ with fixed point $\u^*$ and factor $c_k:=\tau({\mu_k})$.
We will prove that if $\u_0\in\mathcal{B}(\delta_0)$, then the results in (i)-(iii) hold.

(i) By $\mu_0=\sqrt{\|F(\u_0)\|}\leq\sqrt{(1+c)\|\u_0-\u^*\|}$, $\mu_0\in(0,{\mu}]$.
Combining \cref{eqF1} with \cref{eqFS1}, we have
\begin{equation}\label{eq-FS}
    \|\mathcal{F}(\u,\mu)\| \leq(1+\tau({\mu}))\|\u-\u^*\| \leq\frac{1+\tau({\mu})}{1-c}\|F(\u)\|,\quad\forall\mu\in(0,\hat{\mu}],\,\u\in\mathcal{B}(\delta).
\end{equation}
Then, (\ref{eq-FS}) and the assumption in (\ref{eq3}) for $k=0,...,K$ invokes
\begin{equation}\label{28}
\begin{aligned}
    \mu_{k+1}=&\frac{1}{\sqrt{\|F(\u_0)\|}}\max_{0\leq j\leq m_k}\|{\mathcal{F}}_{k-m_k+j}\|
    \leq\frac{\|\mathcal{F}_0\|}{\sqrt{\|F(\u_0)\|}}
    \leq\frac{(1+\tau({\mu}_0))\|F(\u_0)\|}{(1-c)\sqrt{\|F(\u_0)\|}}\\
    \leq&\frac{2}{1-c}\sqrt{\|F(\u_0)\|}
    \leq\frac{2}{1-c}\sqrt{(1+c)\|\u_0-\u^*\|},
\end{aligned}
\end{equation}
from \cref{u0}, which implies $\mu_{k+1}\in(0,{\mu}]$ for all $k=0,\ldots,K$.

(ii) For $0\leq k\leq K+1$, since $\mu_k\in(0,{\mu}]$, by the boundedness of $\tau(\mu)$, $c_k\leq\bar{c}$.
Then, combining \cref{eqFS1} with \cref{eq3}, we have

\begin{equation}\label{eq11}
\|\u_k-\u^*\| \leq\frac{\|{\mathcal{F}}_k\|}{1-c_k}\leq
\frac{\|{\mathcal{F}}_0\|}{1-c_k}
\leq\frac{(1+c_0)\|\u_0-\u^*\|}{1-c_k}
\leq\frac{2\|\u_0-\u^*\|}{1-\bar{c}},
\end{equation}
which implies that $\u_k\in\mathcal{B}(\delta)$ for all
$0\leq k\leq K$ by $\|\u_0-\u^*\| \leq\frac{1-\bar{c}}{2}\delta$.
Recalling (\ref{eq-u-s}), we have
$$\begin{aligned}
\|\u_{K+1}-\u^*\|=&\left \|\sum_{j=0}^{m_K}\alpha_j^K{\mathcal{G}}_{K-m_K+j}-\u^* \right \|
=\left \|\sum_{j=0}^{m_K}\alpha_j^K({\mathcal{G}}_{K-m_K+j}-{\mathcal{G}}(\u^*,\mu_{K-m_K+j}))\right \|\\
\leq&M_{\alpha}\max_{0\leq j\leq m_K} c_{K-m_K+j}\left \|\u_{K-m_K+j}-\u^* \right \|
\leq\frac{2M_{\alpha}}{1-\bar{c}}\|\u_0-\u^*\|,
\end{aligned}$$
where the first equality follows from $\mathcal{G}(\u^*,\mu_{K-m_K+j})=G(\u^*)=\u^*$ and $\sum_{j=0}^{m_k}\alpha_j^k=1$, the last inequality uses (\ref{eq11}) and $c_{K-m_K+j}\leq\bar{c}$.
Thus, $\u_{K+1}\in\mathcal{B}(\delta)$ by $\|\u_0-\u^*\| \leq\frac{1-\bar{c}}{2M_{\alpha}}\delta$.

(iii)
For $\sum_{j=0}^{m_K}\alpha_j^K \u_{K-m_K+j}$, by \cref{ass2} and \cref{eq11}, we obtain that
$$\left\|\sum_{j=0}^{m_K}\alpha_j^K\u_{K-m_k+j}-\u^*\right\| \leq M_{\alpha}\max_{0\leq j\leq m_K}\|\u_{K-m_k+j}-\u^*\|
\leq\frac{2M_{\alpha}}{1-\bar{c}}\|\u_0-\u^*\|.$$
Then, $\sum_{j=0}^{m_K}\alpha_j^k\u_{K-m_k+j}\in\mathcal{B}(\delta)$ by $\|\u_0-\u^*\|\leq\frac{1-\bar{c}}{2M_{\alpha}}\delta$.
\end{proof}

\begin{theorem}\label{theorem3}
Let Assumption \ref{ass2} holds. If $\u_0$ is sufficiently close to $\u^*$, then the iteration $\{\u_k\}$ given by Smoothing Anderson(m) converges to $\u^*$ and it holds
\[\limsup_{k\rightarrow\infty}\left(\frac{\|F(\u_k)\|}{\|F(\u_0)\|}\right)^{{1}/{k}}\leq {c}, \quad
\limsup_{k\rightarrow\infty}\left(\frac{\|\u_{k}-\u^*\|}{\|\u_0-\u^*\|}\right)^{{1}/{k}}\leq {c}.\]
\end{theorem}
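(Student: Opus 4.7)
The plan is to establish $r$-linear convergence with rate $c$ by combining stability of the iterates (Lemma~\ref{lemma-uk}), a Taylor-type linearization of $\mathcal{G}$ at $\u^*$ (Definition~\ref{ass3}), and a strong induction that exploits the adaptive update of $\mu_k$. First, I would argue inductively via Lemma~\ref{lemma-uk} that, provided $\|\mathcal{F}_j\|\le\|\mathcal{F}_0\|$ has held at every prior step, both $\u_k$ and the combinations $\sum_j\alpha_j^k\u_{k-m_k+j}$ lie in $\mathcal{B}(\delta)$ and $\mu_k\in(0,\hat\mu]$. Property~(iii) of Definition~\ref{ass3} yields a $\mu$-independent Jacobian $M:=\mathcal{G}'(\u^*,\mu)$, and the contraction factor $\tau(\mu)$, whose limit as $\mu\downarrow 0$ equals $c$, provides the key bound $\|M\|\le c$.

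Second, using Definition~\ref{ass3}(iii)-(iv) together with $\sum_j\alpha_j^k=1$, I would derive the twin identities
\[
e_{k+1}=Mw_k+d_k,\qquad \sum_{j=0}^{m_k}\alpha_j^k\mathcal{F}_{k-m_k+j}=(M-I)w_k+d_k,
\]
where $e_k:=\u_k-\u^*$, $w_k:=\sum_j\alpha_j^k e_{k-m_k+j}$, and $d_k:=\sum_j\alpha_j^k\Delta_{k-m_k+j}$ with $\|\Delta_{k-m_k+j}\|\le\beta\|e_{k-m_k+j}\|^2/(2\mu_{k-m_k+j})$. Since $I-M$ is invertible with $\|(I-M)^{-1}\|\le 1/(1-c)$ and $\|\sum_j\alpha_j^k\mathcal{F}_{k-m_k+j}\|\le\|\mathcal{F}_k\|$ by Assumption~\ref{ass2}, the second identity controls $\|w_k\|$ in terms of $\|\mathcal{F}_k\|$ and $\|d_k\|$, while the first gives $\|e_{k+1}\|\le c\|w_k\|+\|d_k\|$. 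A complementary estimate $\|w_k\|\le M_\alpha\max_j\|e_{k-m_k+j}\|$, from the boundedness of $\sum_j|\alpha_j^k|$, will be available to track the error history when needed.

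Third, I would close by strong induction, seeking a bound of the form $\|e_k\|\le P(k)c^k\|e_0\|$ with $P(k)$ of at most polynomial growth in $k$. The decisive mechanism is the adaptive update $\mu_{k+1}=\max_j\|\mathcal{F}_{k-m_k+j}\|/\sqrt{\|F(\u_0)\|}$: under the inductive hypothesis, $\mu_j$ tracks $\|\mathcal{F}\|$ within the preceding window, so $\|\Delta_j\|\le\beta\|e_j\|^2/(2\mu_j)$ is smaller than $c\|e_j\|$ by a factor on the order of $\|e_0\|^{1/2}$. Taking $\u_0$ sufficiently close to $\u^*$, and hence maintaining $\|\mathcal{F}_k\|\le\|\mathcal{F}_0\|$, the $\|d_k\|$-term becomes a genuine lower-order perturbation to the leading $c\|w_k\|$, and the induction yields $\limsup_k\|e_k\|^{1/k}\le c$. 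The analogous limsup bound on $\|F(\u_k)\|$ then follows from~\eqref{eqF1}.

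The principal obstacle is reconciling the two complementary bounds on $w_k$ without losing the rate: the window-max estimate brings the factor $M_\alpha$ along with a potential loss of $c^{-m_k}$, whereas the resolvent-based estimate brings the factor $c/(1-c)$. The rate-$c$ conclusion hinges on the adaptive $\mu_k$ making $\|d_k\|$ decay faster than the leading $c\|w_k\|$, which in turn requires the induction to propagate not only the $r$-linear bound on $\|e_k\|$ but also the corresponding scale of $\mu_k$ at every step. Careful bookkeeping of the polynomial factor $P(k)$ and the $\|e_0\|^{1/2}$ slack afforded by the adaptive smoothing is what ultimately delivers the sharp rate $c$ in the limsup.
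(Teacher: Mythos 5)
Your setup is sound and your two identities are exactly the right algebra: with $M=\mathcal{G}'(\u^*,\mu)$ ($\mu$-independent by Definition~\ref{ass3}(iii), and $\|M\|\le c$ by continuity of $\tau$ at $0$), one indeed has $e_{k+1}=Mw_k+d_k$ and $\sum_j\alpha_j^k\mathcal{F}_{k-m_k+j}=(M-I)w_k+d_k$. The gap is in how you combine them. The resolvent route gives $\|w_k\|\le(1-c)^{-1}(\|\mathcal{F}_k\|+\|d_k\|)$, hence $\|e_{k+1}\|\le \tfrac{c}{1-c}\|\mathcal{F}_k\|+O(\|d_k\|)$; to close any induction you must then convert between $\|e\|$ and $\|\mathcal{F}\|$ via \eqref{eqFS1}, which costs a factor $1+\tau(\mu)$, so the per-step constant you actually obtain is of order $c(1+\bar c)/(1-c)$. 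This exceeds $c$, and exceeds $1$ once $c\gtrsim 1/3$, so no bookkeeping of the polynomial prefactor $P(k)$ recovers the rate $c$; the alternative window-max bound $\|w_k\|\le M_\alpha\max_j\|e_{k-m_k+j}\|$ fares no better (it yields the rate $M_\alpha^{1/(m+1)}c^{1/(m+1)}$ of the EDIIS-type analysis, not $c$). You correctly flag this as ``the principal obstacle'' but do not resolve it, and no mixture of the two $w_k$-bounds does. The resolution is to bypass $w_k$ and $e_{k+1}$ entirely: substituting your second identity into the first gives $\mathcal{F}_{k+1}=M\sum_j\alpha_j^k\mathcal{F}_{k-m_k+j}-d_k+\Delta_{k+1}$, whence $\|\mathcal{F}_{k+1}\|\le c\,\|\mathcal{F}_k\|+(\text{h.o.t.})$ with no $(1-c)^{-1}$ loss. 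The paper achieves the same thing nonlinearly, writing $\mathcal{F}_{K+1}=A_K+B_K$ with $A_K=\mathcal{G}_{K+1}-\mathcal{G}\bigl(\sum_j\alpha_j^K\u_{K-m_K+j},\mu_{K+1}\bigr)$ and using contractivity of $\mathcal{G}(\cdot,\mu_{K+1})$ together with $\u_{K+1}-\sum_j\alpha_j^K\u_{K-m_K+j}=\sum_j\alpha_j^K\mathcal{F}_{K-m_K+j}$ and Assumption~\ref{ass2}. Either way, the induction must run on $\|\mathcal{F}_k\|$ directly.

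A second, smaller omission: the remainders $\Delta_{k-m_k+j}$ carry $\mu_{k-m_k+j}$ in the denominator, not $\mu_{k+1}$, so converting $\|e_{k-m_k+j}\|^2/\mu_{k-m_k+j}$ into $\sqrt{\|F(\u_0)\|}\,\|\mathcal{F}_{k-m_k+j}\|$ requires $\mu_{k-m_k+j}\gtrsim\mu_{k+1}$. The paper obtains this by proving $\{\mu_k\}_{k\ge 1}$ is nonincreasing, which in turn forces a second induction hypothesis, $\|\mathcal{F}_k\|\le\max_{0\le j\le m_{k-1}}\|\mathcal{F}_{k-1-m_{k-1}+j}\|$, to be carried along with the geometric decay. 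Your phrase about propagating ``the corresponding scale of $\mu_k$'' points in this direction but must be made into an explicit hypothesis of the induction. The final passage to $\|F(\u_k)\|$ via \eqref{eqF1} is fine (the paper instead uses $\|\mathcal{G}(\u,\mu)-G(\u)\|\le\kappa\mu$ together with $\mu_{k+1}\to 0$; both work).
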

\begin{proof}

For a given $\epsilon>0$ satisfying ${c}+\epsilon<1$. By the continuity of $\tau(\mu)$, there exists $\bar{\mu}\in(0,\hat{\mu}]$ such that $\tau(\bar{\mu})\leq {c}+\frac{\epsilon}{2}$. Then, by \cref{ass3}, ${\mathcal{G}}(\u,\mu)$ is a contractive mapping on $\mathcal{B}(\delta)$ with factor ${{c}}+\frac{\epsilon}{2}$ and $\mathcal{G}(\u^*,\mu)=\u^*$ for any $\mu\in(0,\bar{\mu}]$. We first set $\u_0\in\mathcal{B}(\delta_0)$ with $\delta_0$ in (\ref{u0}) and $\mu=\bar{\mu}$ defined as above.

We divide the proof into four steps,
where the first three steps are to prove the following two important inequalities:
\begin{equation}\label{eq-th1-7}
\|{\mathcal{F}}_k\|\leq({c}+\epsilon)^{k}\|{\mathcal{F}}_0\|,\quad\forall k=0,1,...,
\end{equation}
\begin{equation}\label{eq13-1}
\|{\mathcal{F}}_k\| \leq\max_{0\leq j\leq m_{k-1}}\{\|{\mathcal{F}}_{k-1-m_{k-1}+j}\| \},\quad\forall k=1,2, ....
\end{equation}

\textbf{Step 1: initialization.}
Since \cref{eq-th1-7} holds for $k=0$, from \cref{lemma-uk}, $\u_1\in\mathcal{B}(\delta)$ and $\mu_0,\mu_1\in(0,\bar{\mu}]$. When $k=1$, $m_{k-1} = m_0 = 0$, \cref{eq13-1} reduces to $\|{\mathcal{F}}_1\|\leq\|{\mathcal{F}}_0\|$.
Notice that $\u_1 = \mathcal{G}_0$ by \cref{remark1}, then
\begin{equation}\label{eq38}
\begin{aligned}
\|{\mathcal{F}}_1\|=&\|{\mathcal{G}}_1-\u_1\|=\|{\mathcal{G}}_1-{\mathcal{G}}_0\|
\leq\|{\mathcal{G}}_1-{\mathcal{G}}(\u_0,\mu_1)\|+\|{\mathcal{G}}(\u_0,\mu_1)-{\mathcal{G}}_0\|.
\end{aligned}
\end{equation}
Using \cref{ass3}-(iii) and (iv) to ${\mathcal{G}}(\u_0,\mu_1)$ and ${\mathcal{G}}_0$
we have
\[\|{\mathcal{G}}(\u_0,\mu_1)-{\mathcal{G}}_0\|\leq\|{\mathcal{G}}(\u_0,\mu_1)-{\mathcal{G}}(\u^*,\mu_1)\|+
\|{\mathcal{G}}(\u^*,\mu_0)-{\mathcal{G}}(\u_0,\mu_0)\|
\leq\frac{\beta\|\u_0-\u^*\|^2}{2\mu_1}+\frac{\beta\|\u_0-\u^*\|^2}{2\mu_0}.\]
Then, by \cref{eq38} and the  contractility of $\mathcal{G}(\u,\mu_1)$ on
$\mathcal{B}(\delta)$, we see that
\begin{equation}\label{eq13}
\begin{aligned}
\|{\mathcal{F}}_1\|
\leq&({c}+\frac{\epsilon}{2})\|\u_1-\u_0\|+\frac{\beta\|\u_0-\u^*\|^2}{\min\{\mu_0,\mu_1\}}
\leq({c}+\frac{\epsilon}{2})\|\mathcal{F}_0\|+\frac{\beta\|\u_0-\u^*\|^2}{\min\{\mu_0,\mu_1\}}.
\end{aligned}
\end{equation}

Recalling the definition of $\mu_0$ and $\mu_1$, and by (\ref{eq-FS}), we have
\begin{equation}\label{mu1}
\mu_1=\frac{1}{\sqrt{\|F(\u_0)\|}}\|{\mathcal{F}}_0\| \leq\frac{2}{1-c}\sqrt{\|F(\u_0)\|}
=\frac{2}{1-c}\mu_0.
\end{equation}
Then,
\[
\frac{\beta\|\u_0-\u^*\|^2}{\min\{\mu_0,\mu_1\}}\leq\frac{2\beta\|\u_0-\u^*\|^2}{(1-c)\mu_1}
\leq\frac{2\beta(1+{c}+\frac{\epsilon}{2})^2}{(1-c)\mu_1}\|\mathcal{F}_0\|^2
\leq\frac{8\beta}{1-c}\sqrt{\|F(\u_0)\|}\|\mathcal{F}_0\| \leq\frac{\epsilon}{2}\|\mathcal{F}_0\|,
\]
where the third inequality follows from $\|\mathcal{F}_0\|=\mu_1\sqrt{\|F(\u_0)\|}$ and the last inequality holds by reducing $\|\u_0-\u^*\|$ if necessary so that $\sqrt{\|F(\u_0)\|}\leq\frac{\epsilon(1-c)}{16\beta}$.
The above inequality together with \cref{eq13} gives \cref{eq-th1-7}, and subsequently implies \cref{eq13-1} for $k=1$.

Then, we suppose that \cref{eq-th1-7} and \cref{eq13-1} hold for $0\leq k\leq K$ and we will establish them for $k=K+1$ with $K\geq1$. Since (\ref{eq-th1-7}) implies (\ref{eq3}), if (\ref{eq-th1-7}) holds for $k=0,1,\ldots, K$, from
Lemma \ref{lemma-uk}, we have that $\u_{k}\in\mathcal{B}(\delta)$ and $\mu_k\in(0,\bar{\mu}]$, $\forall 0\leq k\leq K+1$,
which implies
\begin{equation}\label{eq30}
\mathcal{G}(\u^*,\mu_k)=G(\u^*)=\u^*,\quad \forall 0\leq k\leq K+1.
\end{equation}

\textbf{Step 2: relationship of \bm{$\mu_K$} and \bm{$\mu_{K+1}$}.}
When $K\geq1$, by (\ref{eq-u-s}) and the supposition of (\ref{eq13-1}) for $k=K$, we get
\[\begin{aligned}
\mu_{K+1}=\frac{1}{\sqrt{\|F(\u_0)\|}}\max_{0\leq j\leq m_K}
\left \|{\mathcal{F}}_{K-m_K+j} \right \|
\leq\frac{1}{\sqrt{\|F(\u_0)\|}}\max_{0\leq j\leq m_{K-1}}
\left \|{\mathcal{F}}_{K-1-m_{K-1}+j}\right \|=\mu_K.
\end{aligned}\]

\textbf{Step 3: induction for (\ref{eq-th1-7}) and (\ref{eq13-1}).} 
Note that
\begin{equation}\label{eq-th1-1}
\begin{aligned}
\|{\mathcal{F}}_{K+1}\|=\|{\mathcal{G}}_{K+1}-{\u}_{K+1}\| \leq\|A_K\|+\|B_K\|,
\end{aligned}
\end{equation}
where
\[A_{K}={\mathcal{G}}_{K+1}-{\mathcal{G}} \left(\sum\nolimits_{j=0}^{m_K}\alpha_j^K{\u}_{K-m_K+j},\mu_{K+1}\right),\,\,
B_K={\mathcal{G}}\left (\sum\nolimits_{j=0}^{m_K}\alpha_j^K{\u}_{K-m_K+j},\mu_{K+1}\right )-{\u}_{K+1}.\]

Since ${\mathcal{G}}(\u,\mu_{K+1})$ is contractive on $\mathcal{B}(\delta)$ with factor $c_{K+1}\leq {c}+\frac{\epsilon}{2}$, the estimate of $\|A_K\|$ is same as it in \cite{BCK,ChenKelley2015,TothKelley2015}, i.e., 
\begin{equation}\label{eq-th1-9}
\|A_{K}\|
\leq{c}_{K+1}\left \|{\u}_{K+1}-\sum\nolimits_{j=0}^{m_K}\alpha_j^k{\u}_{K-m_k+j}\right \|
\leq{c}_{K+1}\|{\mathcal{F}}_{K}\| \leq({c}+\frac{\epsilon}{2})\|{\mathcal{F}}_{K}\|,
\end{equation}
where we use the condition of $\alpha_j^K$ in Assumption \ref{ass2}-(i).

Using \cref{ass3}-(iv) to ${\mathcal{G}}\left (\sum\nolimits_{j=0}^{m_K}\alpha_j^K{\u}_{K-m_K+j},\mu_{K+1}\right )$ and ${\mathcal{G}}_{K-m_K+j}$, and by $\sum_{j=0}^{m_K}\alpha_j^K=1$ and \cref{ass3}-(iii), we obtain
\begin{equation}\label{eq37}
\|B_{K}\|
\leq\frac{\beta}{2\mu_{K+1}}\left\|\sum\nolimits_{j=0}^{m_K}\alpha_j^K{\u}_{K-m_K+j}-\u^*\right\|^2
+\sum\nolimits_{j=0}^{m_K}\frac{\beta|\alpha_j^K|}{2\mu_{K-m_K+j}}\|{\u}_{K-m_K+j}-\u^*\|^2.
\end{equation}

Note that
\begin{equation}\label{eq35}
\begin{aligned}
&\frac{1}{2\mu_{K+1}}\left \|\sum\nolimits_{j=0}^{m_K}\alpha_j^K{\u}_{K-m_K+j}-\u^*\right \|^2
\leq\frac{M_{\alpha}^2}{2\mu_{K+1}}\max_{0\leq j\leq m_K}\|{\u}_{K-m_K+j}-\u^*\|^2\\
\leq&\frac{M_{\alpha}^2}{2(1-\bar{c})^2\mu_{K+1}}\max_{0\leq j\leq m_K}\|{\mathcal{F}}_{K-m_K+j}\|^2
=\frac{M_{\alpha}^2\sqrt{\|F(\u_0)\|}}{2(1-\bar{c})^2}\max_{0\leq j\leq m_K}\|{\mathcal{F}}_{K-m_K+j}\|,
\end{aligned}
\end{equation}
which uses $c_{K-m_K+j}\leq \bar{c}$ in the second inequality and the definition of $\mu_{K+1}$ in the last equality.

Similar to the analysis of (\ref{eq35}), by $\mu_1\leq\frac{2}{1-c}\mu_0$ and $\mu_{k}$ is nonincreasing for $k=1,\ldots,K+1$,
we get
\begin{equation}\label{eq36}
\begin{aligned}
\frac{1}{2\mu_{K-m_K+j}}\|{\u}_{K-m_K+j}-\u^*\|^2
&\leq\frac{1}{(1-\bar{c})^2(1-c)\mu_{K+1}}\|{\mathcal{F}}_{K-m_K+j}\|^2 \\
&\leq\frac{\sqrt{\|F(\u_0)\|}}{(1-\bar{c})^2(1-c)}\|{\mathcal{F}}_{K-m_K+j}\|.
\end{aligned}
\end{equation}

Combining (\ref{eq-th1-1})-(\ref{eq36}), we have
\begin{equation}\label{eq-th1-8}
\|{\mathcal{F}}_{K+1}\|
\leq({c}+\frac{\epsilon}{2})\|{\mathcal{F}}_{K}\|
+\lambda\sqrt{\|F(\u_0)\|} \max_{0\leq j\leq m_K}\|{\mathcal{F}}_{K-m_K+j}\|
\end{equation}
with $\lambda=\frac{M_{\alpha}^2\beta }{2(1-\bar{c})^2}+
\frac{M_{\alpha}\beta }{(1-\bar{c})^2(1-c)}$.
Hence, (\ref{eq13-1}) can be held for $k=K+1$ by further reducing $\|\u_0-\u^*\|$ if necessary so that $\sqrt{\|F(\u_0)\|}\leq\frac{1-{{c}}-\frac{\epsilon}{2}}{\lambda}$.

Recalling (\ref{eq-th1-8}) and by the supposition in (\ref{eq13-1}) for $0\leq k\leq K$,
we have
\[
\|{\mathcal{F}}_{K+1}\| \leq({c}+\epsilon)^{K+1}\|{\mathcal{F}}_{0}\|
\left(\frac{{c}+\frac{\epsilon}{2}}{{c}+\epsilon}+\lambda\sqrt{\|F(\u_0)\|}({c}+\epsilon)^{-m-1}\right)
\leq({c}+\epsilon)^{K+1}\|{\mathcal{F}}_{0}\|,
\]
which holds as $\|\u_0-\u^*\|$ is sufficiently small so that
$$\sqrt{\|F(\u_0)\|}
\leq\frac{\epsilon({c}+\epsilon)^{m}}{2\lambda}.$$
Therefore, the estimation in (\ref{eq-th1-7}) holds for $k=K+1$.

\textbf{Step 4: calculation of \bm{$\|{F}({u}_{k+1})\|$}.}
From \cref{ass3}-(i), we have
\begin{equation}\label{eq4}
\|{\mathcal{F}}_{k+1}\|
=\|{\mathcal{G}}_{k+1}-{\u}_{k+1}\|
\geq \|{G}({\u}_{k+1})-{u}_{k+1}\|-\|{\mathcal{G}}_{k+1}-{G}({\u}_{k+1})\| \geq \|F(\u_{k+1})\|-\kappa\mu_{k+1}.
\end{equation}

Thanks to \cref{eq-u-s}, \cref{eq-FS}, \cref{eq-th1-7} and \cref{eq4}, one has
\begin{equation}\label{eq9-s}
\begin{aligned}
\|F(\u_{k+1})\|\leq&\|{\mathcal{F}}_{k+1}\|
+\kappa\mu_{k+1}
\leq({c}+\epsilon)^{k+1}\|{\mathcal{F}}_{0}\|
+\frac{\kappa}{\sqrt{\|F(\u_0)\|}}({c}+\epsilon)^{k-m}\|{\mathcal{F}}_{0}\|  \\
\leq&({c}+\epsilon)^{k+1}
\left(1+\frac{\kappa}{({c}+\epsilon)^{m+1}\sqrt{\|F(\u_0)\|}}\right)\left(\frac{2}{1-c}\right)\|F(\u_0)\|,
\end{aligned}\end{equation}
where the last inequality uses (\ref{eq-FS}).

By $$\lim_{k\rightarrow\infty}
\left[\left(1+\frac{\kappa}{({c}+\epsilon)^{m+1}\sqrt{\|F(\u_0)\|}}\right)\left(\frac{2}{1-c}\right)\right]^{\frac{1}{k+1}}=1,$$ (\ref{eq9-s}) implies
$$\limsup_{k\rightarrow\infty}\left(\frac{\|F(\u_{k+1})\|}{\|F(\u_0)\|}\right)^{\frac{1}{k+1}}\leq {c}+\epsilon.$$

Since we can restart the proof by letting $\u_0$ sufficiently close enough to $\u^*$ to reduce $\epsilon$ further, due to the arbitrariness of $\epsilon$, we complete the proof of the r-linear convergence of $\|F(\u_k)\|$ with factor ${c}$.

From \cref{eqF1} and $\lim_{k\rightarrow\infty}\left(\frac{1-c}{1+c}\right)^{1/k}=1$, we can also obtain the r-linear convergence of $\|\u_k-\u^*\|$ with factor ${c}$, i.e., 
\[\limsup_{k\rightarrow\infty}\left(\frac{\|\u_{k}-\u^*\|}{\|\u_0-\u^*\|}\right)^{\frac{1}{k}}\leq {c}.\]

\end{proof}

\begin{lemma}\label{lemma1}
With the conditions in Theorem \ref{theorem3}, the sequence $\{\mu_k\}$ defined in (\ref{eq-u-s}) satisfies $\mu_{k+1}\leq\mu_k$ for $k\geq1$ and $\lim_{k\rightarrow\infty}\mu_k=0$.
\end{lemma}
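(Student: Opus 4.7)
The plan is to split the lemma into two statements and treat each in turn. The monotonicity $\mu_{k+1}\le\mu_k$ is essentially already present in \textbf{Step 2} of the proof of \cref{theorem3}, where the induction inequality
\[
\|{\mathcal{F}}_k\|\le\max_{0\le j\le m_{k-1}}\|{\mathcal{F}}_{k-1-m_{k-1}+j}\|,\qquad k\ge 1,
\]
(that is, \cref{eq13-1}) is shown to hold for every $k$ under the hypotheses of \cref{theorem3}. Taking this inequality as given, the monotonicity reduces to a comparison of the two maxima that define $\mu_k$ and $\mu_{k+1}$. For the limit, the idea is to combine monotonicity with the r-linear bound $\|\mathcal{F}_k\|\le(c+\epsilon)^k\|\mathcal{F}_0\|$ obtained in \cref{eq-th1-7}.

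For monotonicity, I would compare the sliding index windows $\{k-m_k,\dots,k\}$ and $\{k-1-m_{k-1},\dots,k-1\}$ appearing in the definitions of $\mu_{k+1}$ and $\mu_k$. Since $m_\ell=\min\{m,\ell\}$, one has $m_k\in\{m_{k-1},m_{k-1}+1\}$, so in moving from $\mu_k$ to $\mu_{k+1}$ the term $\|\mathcal{F}_k\|$ is added to the window, and, when $m_k=m_{k-1}$, the term $\|\mathcal{F}_{k-1-m_{k-1}}\|$ is dropped. Dropping a term cannot increase a maximum, and the newly added term $\|\mathcal{F}_k\|$ is bounded by the old maximum $\sqrt{\|F(\u_0)\|}\,\mu_k$ precisely by \cref{eq13-1}. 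Therefore $\mu_{k+1}\le\mu_k$ for all $k\ge 1$ (the $k=1$ case is a direct check using $m_0=0$ and $\|\mathcal{F}_1\|\le\|\mathcal{F}_0\|$).

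For the limit, I would invoke \cref{eq-th1-7} to bound each residual in the window defining $\mu_{k+1}$:
\[
\|\mathcal{F}_{k-m_k+j}\|\le (c+\epsilon)^{k-m_k+j}\|\mathcal{F}_0\|\le (c+\epsilon)^{k-m}\|\mathcal{F}_0\|,\qquad 0\le j\le m_k,
\]
and then
\[
\mu_{k+1}\le\frac{(c+\epsilon)^{k-m}\|\mathcal{F}_0\|}{\sqrt{\|F(\u_0)\|}}\xrightarrow[k\to\infty]{}0,
\]
which together with $\mu_k\ge 0$ yields $\lim_{k\to\infty}\mu_k=0$.

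The hardest part is really just bookkeeping of the two cases $m_k=m_{k-1}$ and $m_k=m_{k-1}+1$ when verifying that the window update cannot increase the max; the non-trivial ingredient, \cref{eq13-1}, is already proved in \cref{theorem3}, so no new estimate is required. The limit is then immediate from the r-linear convergence bound on $\|\mathcal{F}_k\|$.
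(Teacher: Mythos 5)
Your proposal is correct and follows essentially the same route as the paper: the monotonicity is exactly the window comparison carried out in Step~2 of the proof of \cref{theorem3} (relying on \cref{eq13-1}), and the limit follows from the r-linear bound \cref{eq-th1-7} together with the definition of $\mu_{k+1}$ in \cref{eq-u-s}. The only difference is that you spell out the index bookkeeping for the two cases $m_k=m_{k-1}$ and $m_k=m_{k-1}+1$, which the paper leaves implicit.
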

\begin{proof}
The first result is proved in the step 2 of the proof of \cref{theorem3}.
Based on the estimation in (\ref{eq-th1-7}) and the definition of $\mu_{k+1}$ in (\ref{eq-u-s}), we have
$$\lim_{k\rightarrow\infty}\mu_{k+1}
\leq\lim_{k\rightarrow\infty}\frac{1}{\sqrt{\|F(\u_0)\|}}({c}+\epsilon)^{k-m}\|{\mathcal{F}}_0\|=0.$$
\end{proof}

\begin{remark}
    By applying the smoothing function of $G$, we derive Smoothing EDIIS(m) by replacing the residual in \cref{EDIIS_alpha} with a smoothing residual. Similarly, Smoothing EDIIS(m) is also r-linear convergent with factor no larger than $c$.
\end{remark}

\subsection{q-linear convergence of Smoothing Anderson(1)}\label{section2.3}
In this subsection, we focus on the study of Smoothing Anderson(1). For $m=1$, when $\mathcal{F}_k \neq \mathcal{F}_{k-1}$, similar to the expression of Anderson(1) in \cite{BCK}, 
the solution $\alpha^k_1$ (denoted by $\alpha^k$) to \cref{s-anderson-m}
can be expressed with closed form
\begin{equation}\label{eq-alpha-s}
\alpha^k=\frac{\mathcal{F}_k^T(\mathcal{F}_k-\mathcal{F}_{k-1})}
{\|\mathcal{F}_k-\mathcal{F}_{k-1}\|^2},
\end{equation}
and notice that \cref{eq-u-s} is reduced to
\begin{equation}\label{eq-mu1-s}
\u_{k+1}=(1-\alpha^k)\mathcal{{G}}_k+\alpha^k\mathcal{{G}}_{k-1},\,\,\mu_{k+1}=\frac{1}{\sqrt{\|F(\u_0)\|}}\max \left \{\|{\mathcal{F}}_{k-1}\|,\|{\mathcal{F}}_{k}\|\right \}.
\end{equation}
As for the situation that $\mathcal{F}_k = \mathcal{F}_{k-1}$, the solution $\alpha^k$ can be taken arbitrary values, and we can set $\alpha^k = 0$ and $\u_{k+1} = \mathcal{G}_k$ for simplicity.

Though $\mathcal{G}(\u,\mu)$ is Lipschitz continuously differentiable with respect to $\u$ for any fixed $\mu$, the q-linear convergence of $\mathcal{F}_k$ for Smoothing Anderson(1) can not be similarly derived by the proof of \cite[Corollary 2.5]{TothKelley2015}. The main reason is the different $\mu_k$ in each iteration.

\begin{theorem}\label{theorem4-s}
If $\u_0$ is sufficiently close to $\u^*$, the smoothing residual function $\|\mathcal{F}_k\|$ is q-linearly convergent to $0$, i.e.,
\[\limsup_{k\rightarrow\infty}\frac{\|\mathcal{F}_{k+1}\|}{\|\mathcal{F}_{k}\|}\leq {c}.\]
\end{theorem}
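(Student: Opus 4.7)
The plan is to mirror the $A_k + B_k$ splitting from the proof of \cref{theorem3} and derive a sharper one-step recurrence by leveraging two ingredients particular to the q-linear setting: \cref{lemma1} yields $\mu_k \to 0$, so continuity in \cref{ass3}-(ii) gives $\tau(\mu_{k+1}) \to \tau(0) = c$; and the adaptive rule $\mu_{k+1} = \max\{\|\mathcal{F}_{k-1}\|, \|\mathcal{F}_k\|\}/\sqrt{\|F(\u_0)\|}$ keeps the smoothing parameter comparable to the current residual, which is precisely what keeps the quadratic smoothing error controlled relative to the linear contraction term. Concretely, I would write $\mathcal{F}_{k+1} = A_k + B_k$ with
\[
A_k = \mathcal{G}_{k+1} - \mathcal{G}\bigl((1-\alpha^k)\u_k + \alpha^k \u_{k-1}, \mu_{k+1}\bigr), \qquad B_k = \mathcal{G}\bigl((1-\alpha^k)\u_k + \alpha^k \u_{k-1}, \mu_{k+1}\bigr) - \u_{k+1}.
\]

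For $A_k$, I combine contractility of $\mathcal{G}(\cdot, \mu_{k+1})$ on $\mathcal{B}(\delta)$ with the optimality of $\alpha^k$ in \cref{s-anderson-m} (using $\alpha=0$ as a feasible point) to get $\|A_k\| \leq \tau(\mu_{k+1}) \|(1-\alpha^k)\mathcal{F}_k + \alpha^k \mathcal{F}_{k-1}\| \leq \tau(\mu_{k+1}) \|\mathcal{F}_k\|$. For $B_k$, I use \cref{ass3}-(iii) and (iv) to expand $\mathcal{G}(\cdot, \mu_{k+1})$, $\mathcal{G}_k$, and $\mathcal{G}_{k-1}$ around $\u^*$; since $\mathcal{G}'(\u^*, \cdot)$ is independent of $\mu$ and $(1-\alpha^k)+\alpha^k=1$, the linear parts cancel and leave
\[
\|B_k\| \leq \frac{\beta \|(1-\alpha^k)(\u_k - \u^*) + \alpha^k (\u_{k-1} - \u^*)\|^2}{2\mu_{k+1}} + \frac{|1-\alpha^k| \beta \|\u_k - \u^*\|^2}{2\mu_k} + \frac{|\alpha^k| \beta \|\u_{k-1} - \u^*\|^2}{2\mu_{k-1}}.
\]
Each summand is then controlled by \cref{eqFS1}, the bound $\mu_{k+1} \geq \|\mathcal{F}_j\|/\sqrt{\|F(\u_0)\|}$ for $j \in \{k-1, k\}$, and the monotonicity $\mu_{k-1} \geq \mu_k \geq \mu_{k+1}$ from \cref{lemma1}, together with an auxiliary bound $|\alpha^k| \leq M_\alpha$ extracted from the closed form \cref{eq-alpha-s} in the spirit of the Anderson(1) argument of \cite{TothKelley2015,BCK}. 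Combining the pieces yields
\[
\|\mathcal{F}_{k+1}\| \leq \tau(\mu_{k+1}) \|\mathcal{F}_k\| + C\sqrt{\|F(\u_0)\|}\bigl(\|\mathcal{F}_k\| + \|\mathcal{F}_{k-1}\|\bigr).
\]

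The hardest part will be extracting $\limsup_{k\to\infty} \|\mathcal{F}_{k+1}\|/\|\mathcal{F}_k\| \leq c$ from this two-step inequality, since the $\|\mathcal{F}_{k-1}\|$ term introduces the ratio $\|\mathcal{F}_{k-1}\|/\|\mathcal{F}_k\|$ which is not a priori bounded above. My plan is to treat the inequality as the linear recurrence $a_{k+1} \leq (c + o(1) + C\eta) a_k + C\eta a_{k-1}$ with $\eta = \sqrt{\|F(\u_0)\|}$, whose dominant characteristic root is $c + O(\eta)$ as $\eta \to 0$, and to exploit the freedom to shrink $\|\u_0 - \u^*\|$ (hence $\eta$) arbitrarily. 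Combined with $\tau(\mu_{k+1}) \to c$, this delivers $\limsup \|\mathcal{F}_{k+1}\|/\|\mathcal{F}_k\| \leq c + O(\eta)$ for each admissible starting point; re-initializing the argument at a late tail iterate $\u_K$, whose $\|F(\u_K)\|$ can be made arbitrarily small by \cref{theorem3}, drives the error constant to zero and yields the claimed bound.
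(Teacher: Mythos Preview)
Your $A_k+B_k$ decomposition and the Taylor-type expansion of $B_k$ via \cref{ass3}-(iii),(iv) match the paper's argument. The gap is in the last step, and it is twofold. First, the uniform bound $|\alpha^k|\le M_\alpha$ you import ``in the spirit of \cite{TothKelley2015,BCK}'' is, in those references, a \emph{consequence} of the inductive hypothesis $\|\mathcal{F}_k\|\le\hat c\,\|\mathcal{F}_{k-1}\|$; it is not available from the closed form \cref{eq-alpha-s} alone, so writing the recurrence with a fixed constant $C$ already presupposes the q-linear decay you are trying to prove. Second, even granting the two-term inequality $a_{k+1}\le (c+o(1)+C\eta)a_k+C\eta\,a_{k-1}$, a characteristic-root argument delivers only r-linear decay at the dominant root $\lambda_+=c+O(\eta)$; it does \emph{not} control $\limsup a_{k+1}/a_k$. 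Sequences obeying such an inequality can have $\limsup a_{k+1}/a_k$ arbitrarily large: alternate a tiny drop $a_{2j+1}=\varepsilon a_{2j}$ with a saturated step $a_{2j+2}=p\,a_{2j+1}+q\,a_{2j}$, so that $a_{2j+2}/a_{2j+1}\approx q/\varepsilon$. Your re-initialization trick shrinks $\eta$ but does nothing to rule out this oscillation.

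The paper resolves this by running an induction on the one-step statement $\|\mathcal{F}_{k+1}\|\le(c+\epsilon)\|\mathcal{F}_k\|$. The hypothesis at step $K{-}1$ gives $\|\mathcal{F}_K-\mathcal{F}_{K-1}\|\ge(1-c-\epsilon)\|\mathcal{F}_{K-1}\|$, and then the closed form \cref{eq-alpha-s} yields the \emph{product} bound
\[
|\alpha^K|\,\|\mathcal{F}_{K-1}\|\le\frac{\|\mathcal{F}_K\|}{\|\mathcal{F}_K-\mathcal{F}_{K-1}\|}\,\|\mathcal{F}_{K-1}\|\le\frac{1}{1-c-\epsilon}\,\|\mathcal{F}_K\|.
\]
This is the missing idea: bound $|\alpha^K|\,\|\mathcal{F}_{K-1}\|$ jointly rather than each factor separately. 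It collapses the two-step estimate to a genuine one-step recurrence $\|\mathcal{F}_{K+1}\|\le(c+\tfrac{\epsilon}{2})\|\mathcal{F}_K\|+\xi_K$ with $\xi_K\le C'\sqrt{\|F(\u_0)\|}\,\|\mathcal{F}_K\|$, and the induction closes once $\u_0$ is close enough to $\u^*$; arbitrariness of $\epsilon$ then gives the claimed $\limsup$ bound.
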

\begin{proof}
For a given $\epsilon\in(0,1-{c})$, we set $\u_0$ sufficiently close to $\u^*$ such that (\ref{eq-th1-7})
and (\ref{eq13-1}) hold. We will first show that
\begin{equation}\label{th4-1-s}
\|\mathcal{F}_{k+1}\| \leq({c}+\epsilon)\|\mathcal{F}_{k}\|\quad \mbox{for all $k\geq0$}.
\end{equation}
We induct on $k$ to prove (\ref{th4-1-s}) and assume it holds for $0\leq k\leq K-1$, which clearly holds for
$K=1$ by (\ref{eq-th1-7}).

Similar to the analysis in  \cite{BCK} and thanks to the supposition in (\ref{th4-1-s}), one has
\[|1-\alpha^K|+|\alpha^K|\leq M_{\alpha}:=\frac{1+{c}+\epsilon}{1-{c}-\epsilon}.\]
By \cref{ass3} and Lemma \ref{lemma-uk}, one has
\[\u_{K+1}\in\mathcal{B}(\delta)\quad\mbox{and}\quad(1-\alpha^K)\u_K+\alpha^K\u_{K-1}\in\mathcal{B}(\delta),\]
which implies ${\mathcal{G}}(\u,\mu_k)$ is a contractive mapping on $\mathcal{B}(\delta)$ with factor ${c}+\frac{\epsilon}{2}$ and $\mathcal{G}(\u^*,\mu_k)=\u^*$ for any $k=0,...,K+1$.

Following the proof of (\ref{eq-th1-9}) and (\ref{eq35})-(\ref{eq36}), and by $\mu_{k+1}\leq\frac{2}{1-c}\mu_k$, $\forall k=0,..,K$, it holds
\begin{equation}\label{th2.5-1}
    \|\mathcal{F}_{K+1}\|\leq \left({c}+\frac{\epsilon}{2}\right)\left\|\mathcal{F}_K \right\|+\xi_K,
\end{equation}

where
\begin{equation}\label{th4-3-s}
\xi_K=\frac{\beta}{(1-c)\mu_{K+1}}\left((|1-\alpha^K|+1)|1-\alpha^K|\|\u_K-\u^*\|^2+
(|\alpha^K|+1)|\alpha^K|\|\u_{K-1}-\u^*\|^2\right).
\end{equation}
By (\ref{eqFS1}) and the definition of $\mu_{k+1}$ in \cref{eq-u-s}, we get
\[\frac{1}{\mu_{K+1}}
\|\u_{K-j}-\u^*\|^2\leq
\frac{1}{\mu_{K+1}(1-{c}-\epsilon)^2}\|\mathcal{F}_{K-j}\|^2
\leq\frac{\sqrt{\|{F}(\u_{0})\|}}{(1-{c}-\epsilon)^2}\|\mathcal{F}_{K-j}\|,\quad j=0,1.
\]

From the supposition that (\ref{th4-1-s}) holds for $k=K-1$, we have
\[
\|\mathcal{F}_K-\mathcal{F}_{K-1}\|\geq\|\mathcal{F}_{K-1}\|-\|\mathcal{F}_{K}\| \geq(1-{c}-\epsilon)
\|\mathcal{F}_{K-1}\|.\]
Then, we have
\[
|\alpha^K|\|\mathcal{F}_{K-1}\|=\frac{|\mathcal{F}_K^T(\mathcal{F}_K-\mathcal{F}_{K-1})|}{\|\mathcal{F}_K-\mathcal{F}_{K-1}\|^2}\|\mathcal{F}_{K-1}\|
\leq\frac{1}{1-{c}-\epsilon}\|\mathcal{F}_K\|,\]
which implies
\begin{equation*}{
\xi_K
\leq\frac{(M_{\alpha}+1)\beta\sqrt{\|{F}(\u_{0})\|}}{(1-{c}-\epsilon)^2(1-c)}\left (M_{\alpha}+\frac{1}{1-{c}-\epsilon} \right )
\|\mathcal{F}_{K}\|. }
\end{equation*}

Hence, (\ref{th4-1-s}) holds for $k=K$
by reducing $\|\u_0-\u^*\|$ so that
$\xi_K\leq\frac{\epsilon}{2}\|\mathcal{F}_{K}\|$.
Since $\epsilon$ is arbitrary, we finish the proof of this statement.
\end{proof}

From (\ref{th4-1-s}), when $\u_0$ is sufficient close to $\u^*$, we can easily find that $\|\mathcal{F}_k\|$ is monotone decreasing, and by (\ref{eq-mu1-s}), we know that
\[\mu_{k+1}=\frac{1}{\sqrt{\|F(\u_0)\|}}\|\mathcal{F}_{k-1}\|,\quad \mbox{for all $k\geq0$}.\]

In Smoothing EDIIS(1), $\alpha^k$ is chosen as a minimizer of the following optimization problem
\begin{equation*}
\min \,\, \frac{1}{2} \left\|(1-\alpha)\mathcal{F}_k+\alpha \mathcal{F}_{k-1}\right\|^2, \quad {\rm s.t.} \; 0\le  \alpha\le 1. 
\end{equation*}
This is a convex optimization problem and its first order optimality condition is
$$\left(\mathcal{F}_k^T(\mathcal{F}_{k-1}-\mathcal{F}_k)+\alpha^k\|\mathcal{F}_{k-1}-\mathcal{F}_k\|^2\right)(\alpha -\alpha^k)\ge 0, \quad {\rm for } \quad \alpha\in [0,1].$$
Hence the solution $\alpha^k$
can be expressed by the middle operator\footnote{mid$(0,a,1)=\left\{\begin{array}{ll}
0, & a<0\\
a, & a\in [0,1]\\
1, & a >1.
\end{array}
\right.
$}, i.e., 
\[
\alpha^k={\rm mid} \left\{ 0, \, \frac{\mathcal{F}_k^T(\mathcal{F}_{k}-\mathcal{F}_{k-1})}{\|\mathcal{F}_{k-1}-\mathcal{F}_k\|^2}, \, 1\right\}.
\]

Following a similar analysis of \cref{theorem4-s}, we only need to check the values of $\xi_K$ with $\alpha_K=0$ and $\alpha^K=1$, and then the results also hold for the sequence $\{\u_k\}$ generated by
Smoothing EDIIS(1).
\begin{corollary}\label{corollary2-s}
Suppose that the assumptions of \cref{theorem4-s} hold. The results in Theorem \cref{theorem4-s} also hold for the sequence $\{\u_k\}$ generated by
Smoothing EDIIS(1).
\end{corollary}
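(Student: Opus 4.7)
The plan is to recycle the induction scheme of \cref{theorem4-s} essentially verbatim, inducting on $k$ to prove $\|\mathcal{F}_{k+1}\|\le ({c}+\epsilon)\|\mathcal{F}_k\|$, and then only re-examining the steps in which the specific closed-form expression for $\alpha^K$ from Smoothing Anderson(1) was used. The first place this matters is the bound $|1-\alpha^K|+|\alpha^K|\le M_\alpha$. Here the situation is actually easier: since $\alpha^K\in[0,1]$ by construction of the mid operator, $|1-\alpha^K|+|\alpha^K|=1$, so we may take $M_\alpha=1$ and the inclusions $\u_{K+1}\in\mathcal{B}(\delta)$ and $(1-\alpha^K)\u_K+\alpha^K\u_{K-1}\in\mathcal{B}(\delta)$ via \cref{lemma-uk} proceed exactly as before, with $\mu_k\in(0,\bar\mu]$ throughout and $\mathcal{G}(\cdot,\mu_k)$ contractive with factor ${c}+\epsilon/2$.

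The core estimate \cref{th2.5-1} with the residual term $\xi_K$ in \cref{th4-3-s} carries over without change (it only used contractivity, \cref{ass3}-(iv), $\sum\alpha_j=1$, and $\mu_{k+1}\le\frac{2}{1-c}\mu_k$). The only step that genuinely used $\alpha^K=\mathcal{F}_K^T(\mathcal{F}_K-\mathcal{F}_{K-1})/\|\mathcal{F}_K-\mathcal{F}_{K-1}\|^2$ was the chain
\[
|\alpha^K|\|\mathcal{F}_{K-1}\|\le \tfrac{1}{1-{c}-\epsilon}\|\mathcal{F}_K\|,
\]
which was used to absorb the $\|\mathcal{F}_{K-1}\|$ term inside $\xi_K$ into $\|\mathcal{F}_K\|$. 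I will replace this step by a case analysis driven by the mid operator, as the hint suggests: if $\alpha^K$ takes the interior value $\mathcal{F}_K^T(\mathcal{F}_K-\mathcal{F}_{K-1})/\|\mathcal{F}_K-\mathcal{F}_{K-1}\|^2\in(0,1)$ (noting the sign convention matches the Anderson(1) formula in \cref{eq-alpha-s}), the same chain works as in \cref{theorem4-s}; if $\alpha^K=0$ the second summand in $\xi_K$ vanishes and the remaining piece is already proportional to $\|\mathcal{F}_K\|$; if $\alpha^K=1$ the first summand vanishes and we need a direct bound on $\|\mathcal{F}_{K-1}\|$ by $\|\mathcal{F}_K\|$.

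The main obstacle is the boundary case $\alpha^K=1$. The key observation is that $\alpha^K=1$ can occur only when the unconstrained minimizer satisfies $\mathcal{F}_K^T(\mathcal{F}_K-\mathcal{F}_{K-1})\ge\|\mathcal{F}_K-\mathcal{F}_{K-1}\|^2$, which after expansion simplifies to $\mathcal{F}_K^T\mathcal{F}_{K-1}\ge\|\mathcal{F}_{K-1}\|^2$; by Cauchy--Schwarz this forces $\|\mathcal{F}_{K-1}\|\le\|\mathcal{F}_K\|$ (and the degenerate case $\mathcal{F}_{K-1}=0$ means we already have the fixed point of $\mathcal{G}(\cdot,\mu_{K-1})$, which by \cref{ass3}-(ii) gives $\u_{K-1}=\u^*$, and the claim is trivial thereafter). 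Using this to replace $\|\mathcal{F}_{K-1}\|$ by $\|\mathcal{F}_K\|$ in the surviving term of $\xi_K$, I obtain an inequality of the same form
\[
\xi_K\le \frac{C\,\beta\sqrt{\|F(\u_0)\|}}{(1-{c}-\epsilon)^2(1-c)}\|\mathcal{F}_K\|
\]
for a universal constant $C$. Choosing $\u_0$ close enough to $\u^*$ so that this bound is at most $\frac{\epsilon}{2}\|\mathcal{F}_K\|$ closes the induction, and the arbitrariness of $\epsilon$ gives $\limsup_{k\to\infty}\|\mathcal{F}_{k+1}\|/\|\mathcal{F}_k\|\le {c}$ just as in \cref{theorem4-s}.
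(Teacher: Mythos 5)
Your proposal is correct and follows essentially the same route as the paper, which simply remarks that one need only re-examine $\xi_K$ in the boundary cases $\alpha^K=0$ and $\alpha^K=1$ of the mid operator. Your Cauchy--Schwarz argument showing that $\alpha^K=1$ forces $\|\mathcal{F}_{K-1}\|\le\|\mathcal{F}_K\|$ is a detail the paper leaves implicit, and it correctly closes the only step where the closed-form expression for $\alpha^K$ was used.
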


\section{Numerical applications and examples}\label{section3}
In this section, we illustrate the performance of the proposed Smoothing Anderson(m) for nonsmooth fixed point problem \cref{eq1-0} by three applications. For simplicity, we refer to Smoothing Anderson(m) algorithm as s-Anderson(m) in this section. All our experiments are performed in Python on a Macbook Pro (2.30GHz, 8.00GB of RAM). In practice, we always transform (\ref{s-anderson-m}) into an unconstrained optimization problem as follows:
\begin{equation}
    \bm{\gamma}^k \in \arg \min \limits_{\bm{\gamma} \in \mathbb{R}^{m_k}} \left\|\mathcal{F}_k-\sum\nolimits_{j=0}^{m_k-1}\gamma_j\left(\mathcal{F}_{k-m_k+j+1}-\mathcal{F}_{k-m_k+j}\right)\right\|
\end{equation}
and set
$$\u_{k+1}=\mathcal{G}_k-\sum\nolimits_{j=0}^{m_k-1}\gamma_j^k(\mathcal{G}_{k-m_k+j+1}-\mathcal{G}_{k-m_k+j}),$$
where $\bm{\gamma}^k = (\gamma_0^k, \dots, \gamma_{m_k-1}^k)^T$ and ${\gamma}_j^k = \sum_{i=0}^{j}\alpha_i^k$ for $i = 0, 1, \dots, m_k-1$. 

We stop the algorithms when
\begin{equation}\label{stop}
\frac{\|F(\u_k)\|}{\|F(\u_0)\|}\leq {\rm tol} \quad {\rm or } \quad k\geq k_{\max},
\end{equation}
for a given tolerance ${\rm tol}$ and maximum iteration $k_{\max}$. It should be noted that we also apply the above stopped criterion to s-Anderson(m), as illustrated in \cref{algo-s}, while Anderson(m) is outlined in \cref{algo-o}. For simplicity, we consistently represent the relative residual norm $\|F(\u_k)\|/\|F(\u_0)\|$ as $\|F_k\|/\|F_0\|$ throughout the following examples. The label `Anderson(m), $\mu$' indicates the application of Anderson(m) to the smoothing function $\mathcal{G}$ with a fixed smoothing parameter $\mu$. In particular, when $\mu = 0$, it reduces to the Anderson(m). “$-$" in tables means failing to complete the test in the given stopped criterion. To reduce the influence of random variations, each experiment is repeated 10 times, and the averaged results are reported as the final values. 

In various applied settings, the fixed point function frequently admits a smoothing function that satisfies \cref{ass3}. This holds not only for the  `max' operator but also for its different combinations and some other functions, such as the absolute value function. In this section, we aim to illustrate the flexibility of constructing appropriate smoothing functions in practical problems through three examples, thereby demonstrating the effectiveness and broad applicability of s-Anderson(m).

\subsection{{Iterative shrinkage-thresholding algorithm}}\label{subsec1}

Consider the following minimization problem:
\begin{equation}\label{l1_reg}
    \min \limits_{\u\in \mathbb{R}^n} \phi(\u) = f(\u)+h(\u),
\end{equation}
where $f: \mathbb{R}^n \rightarrow \mathbb{R}$ is convex, $L$-smooth, i.e., $\|\nabla f(\u) - \nabla f(\v)\| \leq L\|\u-\v\|$, and $h$ is a convex, closed function. When the proximal operator of $h$ is unique and can be computed, a classical method for solving (\ref{l1_reg}) is the proximal gradient algorithm (PGA): 
\begin{equation}\label{PGA}
    \u_{k+1} = \operatorname{prox}_{\alpha h}(\u_k - \alpha \nabla f(\u_k)),
\end{equation}
where $\operatorname{prox}_{\alpha h}(\u) = \arg \min \nolimits_{\bm{x}\in \mathbb{R}^n} \{h(\bm{x})+\frac{1}{2 \alpha}\|\bm{x}-\u\|^2\}$ and $\alpha \in (0,2/L)$ is a suitable stepsize. In many applications such as machine learning, $h$ is usually defined by $\ell_1$ regularization, i.e., $h(\u)=\lambda \|\u\|_1$, where $\lambda$ is the positive regularization parameter. 
In this case, PGA reduces to the iterative shrinkage-thresholding algorithm (ISTA), i.e.,
\begin{equation}\label{ISTA}
    \u_{k+1} = S_{\alpha \lambda}(\u_k - \alpha \nabla f(\u_k)),
\end{equation}
and 
\begin{equation*}
    S_{\alpha \lambda}(\u)_i = \operatorname{sign}(u_i)(|u_i|-\alpha \lambda)_{+}, \quad i=1, \ldots, n.
\end{equation*}
Obviously, $S_{\alpha \lambda}$ is a nonexpansive operator. Define the fixed point problem as
\begin{equation}\label{ISTA_fixed}
    \u = G(\u) := S_{\alpha \lambda}(\u - \alpha \nabla f(\u)).
\end{equation}

With the max operator, $S_{\alpha \lambda}$ can be rewritten in an alternative form, i.e., $S_{\alpha \lambda}(\u) = \max\{\u - \alpha \lambda, 0\} - \max\{-\u - \alpha \lambda, 0\}.$ By setting $Q:\mathbb{R}^n\rightarrow \mathbb{R}^{n}$ with $Q(\u):=\u-\alpha \nabla f(\u)$, using $\phi$ in \cref{sm1}, we can set the smoothing function by
$\mathcal{G}(\u,\mu)=\Phi(Q(\u)-\alpha \lambda,\mu) - \Phi(-Q(\u)-\alpha \lambda,\mu)$, 
where $\Phi(\v,\mu)=(\phi(v_1,\mu),\ldots,\phi(v_n,\mu))^T$ for $\v=(v_1, \ldots, v_n)^T\in\mathbb{R}^n$.

As for the contraction of $G$,
we add a new assumption as follows:
\begin{assumption}\label{str_mon}
    The mapping $\nabla f$ is strongly monotone, i.e., there exists a positive constant $\tau$ such that for all $\u, \v \in D$, it holds that
    $$(\nabla f(\u) - \nabla f(\v))^ \mathrm{T}(\u-\v) \geq \tau \|\u - \v\|^2.$$
\end{assumption}

For $\u, \v \in D$, since $\nabla f$ and $S_{\alpha\lambda}$ are Lipschitz continuous, by \cref{str_mon}, we have
\begin{equation*}
    \begin{aligned}
    \|S&_{\alpha \lambda}(\u - \alpha \nabla f(\u)) - S_{\alpha \lambda}(\v - \alpha \nabla f(\v))\|^2 \\
    \leq& \|\u - \alpha \nabla f(\u) - \v + \alpha \nabla f(\v) \|^2 \\ 
    =& \|\u - \v\|^2 + \alpha^2 \|\nabla f(\u) - \nabla f(\v)\|^2 -2\alpha(\u-\v)^ \mathrm{T}(\nabla f(\u) - \nabla f(\v)) \\
    \leq& (1+\alpha^2L^2 -2\alpha \tau)\|\u- \v\|^2.
    \end{aligned}
\end{equation*}
Obviously, if $\alpha \in (0, \min\{2/L, 2 \tau/L^2\})$, then $1+\alpha^2L^2 -2\alpha \tau<1$, which means that $G$ in (\ref{ISTA_fixed}) is a contraction mapping. In particular, when $\nabla f(\u) = \bm{M} \u + \b$ with a symmetric positive definite matrix $\bm{M}\in \mathbb{R}^{n \times n}$ and $\b \in \mathbb{R}^n$, the contraction condition is satisfied for any $\alpha \in (0, 2/L)$.

\begin{example}
Elastic net regression is an extension of Lasso regression, which is a combination of the $\ell_1$ (Lasso regression) and $\ell_2$ (Ridge regression) penalties, and has been widely used in domains with massive datasets, such as machine learning. It can improve the stability in high dimensional data and has become an important tool in the analysis of these datasets \cite{zou2005regularization}. 

Consider the following elastic net regression (ENR) problem:
\begin{equation}\label{enr}
    \min_{\u \in \mathbb{R}^n}  \frac{1}{2}\|\A\u - \b\|^2 + \lambda \left(\frac{1-\beta}{2}\|\u\|^2 + \beta \|\u\|_1 \right)
\end{equation}
where $\A \in \mathbb{R}^{M \times n}$, $\b\in \mathbb{R}^M$. In this paper, we always choose $\beta = \frac{1}{2}$ and $\lambda = 0.001\|\A^T\b\|_{\infty}$, where $\|\A^T\b\|_{\infty}$ is the smallest value of $\lambda$ to guarantee that problem (\ref{enr}) only has the zero solution \cite{o2016conic}. 

When we use ISTA to solve the ENR problem in (\ref{enr}), the iteration scheme is as follows: 
\begin{equation}
    \u_{k+1} = S_{\alpha \lambda/2} \left(\u_k - \alpha\left(\A^T(\A\u_k - \b) + \frac{\lambda}{2}\u_k\right) \right).
\end{equation}
Obviously, $S_{\alpha \lambda/2}\left(\u - \alpha (\A^T(\A\u - \b) + \frac{\lambda}{2}\u)\right)$ is contractive when $\alpha \in (0, 2/L)$ with $L = \|\A\|^2+\lambda/2$. It is worth noting that ISTA can be regarded as the original fixed point iteration, which is also known as Picard iteration.  

In this $\{\A, \b\}$, we choose $M=500$ and $n = 1000$. We generate data and the initial point $\u_0$ as follows:
\begin{lstlisting}[basicstyle=\normalfont\ttfamily]
    import numpy as np
    import scipy.sparse as sp
    A = np.random.randn(M, n)
    e = np.random.randn(M, 1)
    X = sp.random(n, 1, density=0.1, format='csr')
    X = X.toarray()
    b = A.dot(X) + 0.1 * e
    u0 = 10 * np.random.randn(n, 1)
\end{lstlisting}
In the process of generating $\b$, we use a sparse vector $\bm{X}$ with a given sparsity and add the noise term $\bm{e}$ that satisfies the normal distribution. We set the stepsize $\alpha = 1.8/L$ with $L = \|\A\|^2+\lambda/2$, ${\rm tol} = 10^{-15}$ and $k_{\max} = 10000$. 

First, we present the results of Anderson(m) and s-Anderson(m) with different values of $m$ for ENR problem. The convergence of $\|F_k\|/\|F_0\|$ by Anderson(m) and s-Anderson(m) is shown in Fig. \ref{fig:enr1} and \ref{fig:enr2} when the sparsity of $\bm{X}$ is 0.1, from which we observe  that s-Anderson(m) outperforms Anderson(m) in both iteration numbers and running time. Moreover, the advantages of s-Anderson(m) over Anderson(m) for the ENR problems with different sparsity on the data are also illustrated in Fig. \ref{fig:enr3} and \ref{fig:enr4}. It can be observed that the gap between s-Anderson(m) and Anderson(m) decreases as sparsity increases, though s-Anderson(3) remains the most effective. Our analysis suggests that this phenomenon arises from the increased sparsity, which results in a greater proportion of non-zero elements in the fixed point $\u^*$. As the number of non-zero elements grows, by the definition of smoothing function, the effect of the smoothing function diminishes, thereby reducing the performance gap between s-Anderson(m) and Anderson(m).

Next, we design experiments to compare the s-Anderson(m) with some other well-known algorithms including FISTA \cite{beck2009fast}, Type-I Anderson Acceleration (AA1) and Stabilized type-I Anderson acceleration\footnote[1]{The code is available at \url{https://github.com/cvxgrp/nonexp_global_aa1}} (AA1-safe) \cite{zhang2020globally}, whose results are shown in \cref{table:enr} and Fig. \ref{fig:enr_1}. We set the stepsize $\alpha = 0.8/L$ for FISTA, the max-memory $m_{\max}=3$ for AA1 and AA1-safe and other hyper-parameters for AA1-safe follow the settings in \cite{zhang2020globally}, while other parameters are the same as in s-Anderson(m). \cref{table:enr} reports the iteration and computational time cost, where we set different tolerances ($10^{-6}/10^{-15}$) to compare the performance of different algorithms. It indicates that s-Anderson(m) introduces extra running time in every iteration compared with Anderson(m), but consumes less time overall, since s-Anderson(m) requires much fewer iterations. In comparison with AA1-safe, although s-Anderson(m) runs more iterations, it significantly reduces the computational time of each iteration, which makes s-Anderson(m) outperforms AA1-safe in total running time.

Finally, Fig. \ref{fig:enrsmooth} illustrates the variation of the smoothing parameter, tending towards zero as $k\rightarrow \infty$, which is consistent with the results in \cref{lemma1}. 
\begin{figure}[t]
\centering 
\subfigure[Convergence of $\|F_k\|/\|F_0\|$ by Anderson(m) and s-Anderson(m) with sparsity=0.1]{
\label{fig:enr1}
\includegraphics[width=0.455\textwidth]{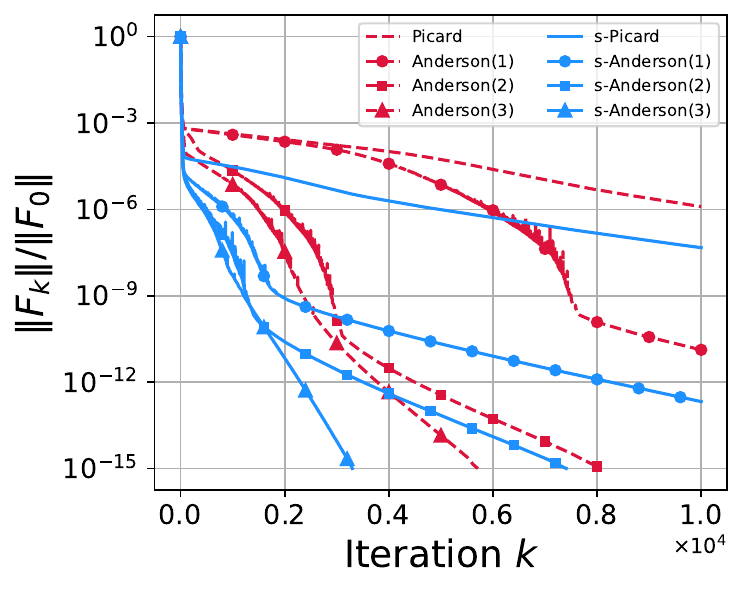}
}
\subfigure[{Convergence of $\|F_k\|/\|F_0\|$ versus time (seconds) with sparsity=0.1}]{
\label{fig:enr2}
\includegraphics[width=0.455\textwidth]{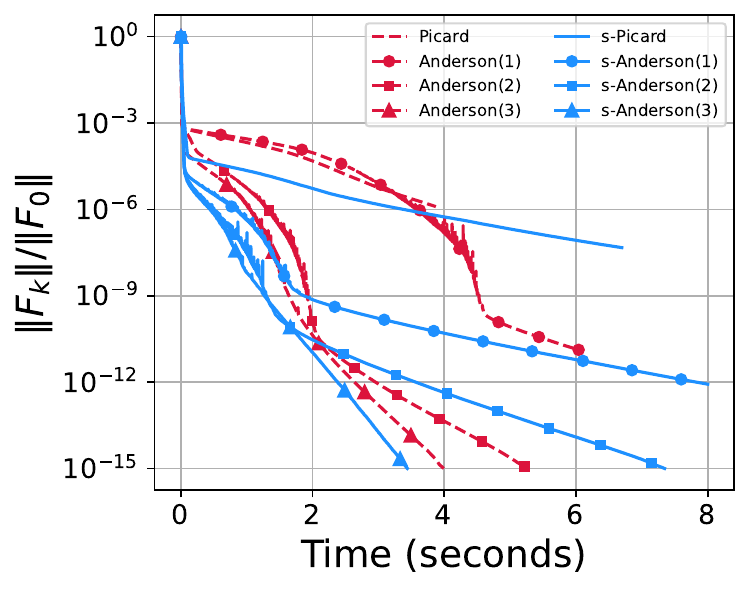}
}
\subfigure[Convergence of $\|F_k\|/\|F_0\|$ by Anderson(m) and s-Anderson(m) with sparsity=0.2]
{
\label{fig:enr3}
\includegraphics[width=0.455\textwidth]{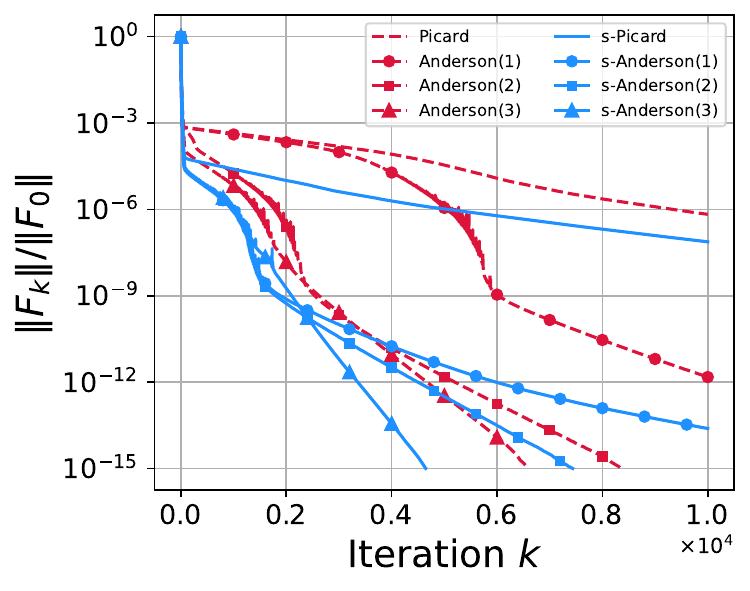}
}
\subfigure[Convergence of $\|F_k\|/\|F_0\|$ by Anderson(m) and s-Anderson(m) with sparsity=0.3]
{
\label{fig:enr4}
\includegraphics[width=0.455\textwidth]{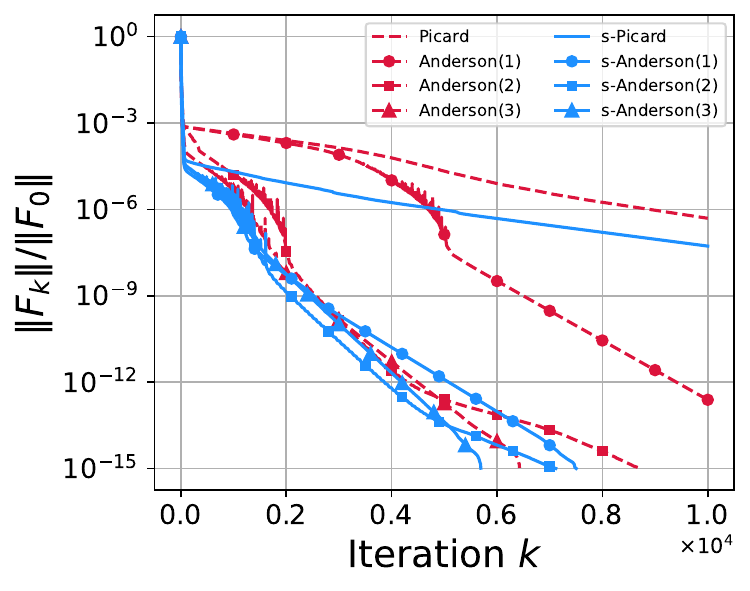}
}
\caption{Elastic net regression problem with different sparsity for ENR problem}
\label{fig:enr}
\vskip -0.3in
\end{figure}

\begin{figure}[ht]
    \centering
    \includegraphics[width=1\textwidth]{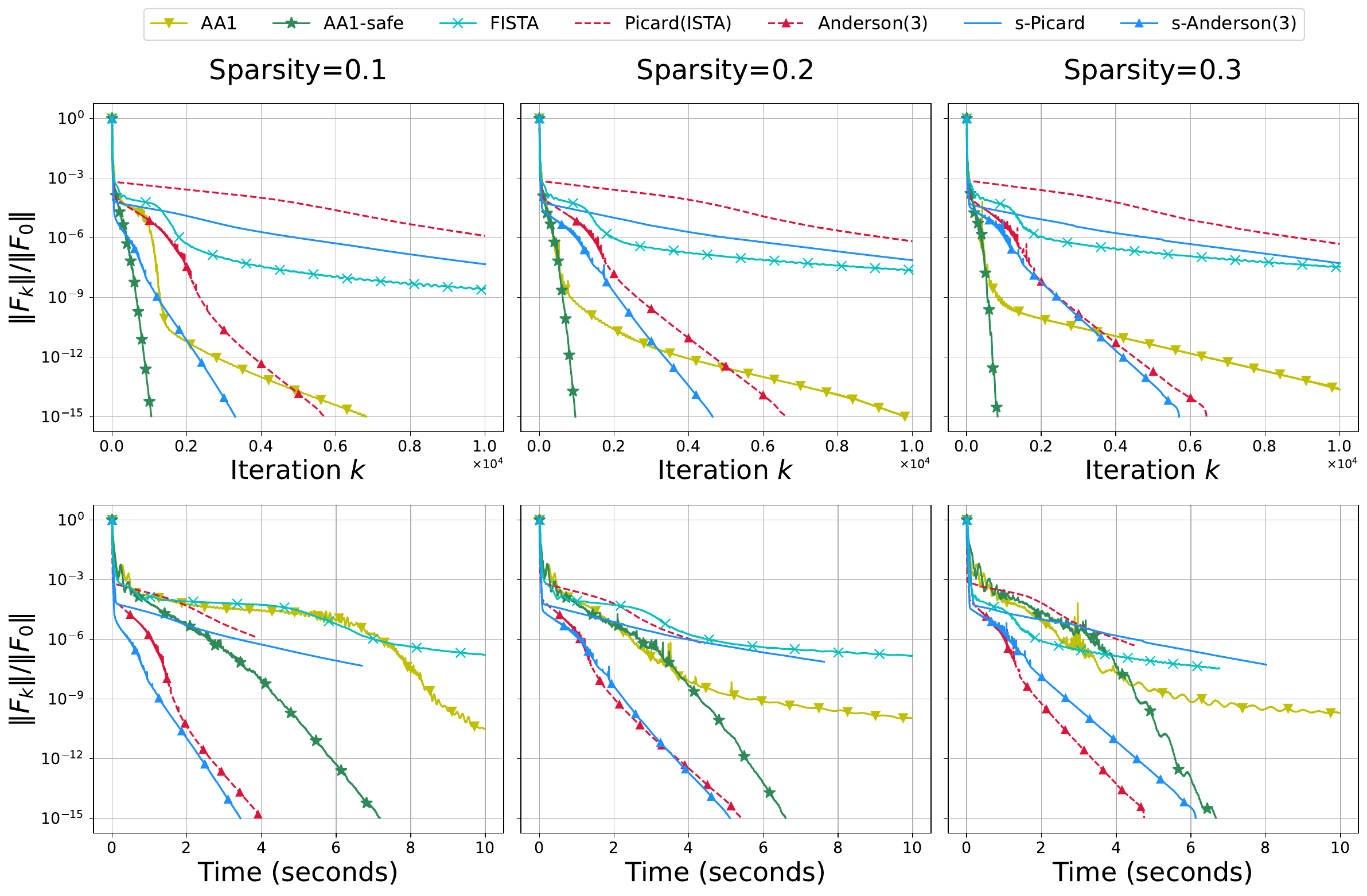}
    \caption{Comparisons of some algorithms for the ENR problem with different sparsities}
    \label{fig:enr_1}
    \vskip -0.2in
\end{figure}

\begin{figure}
    \centering
    \includegraphics[width=1\textwidth]{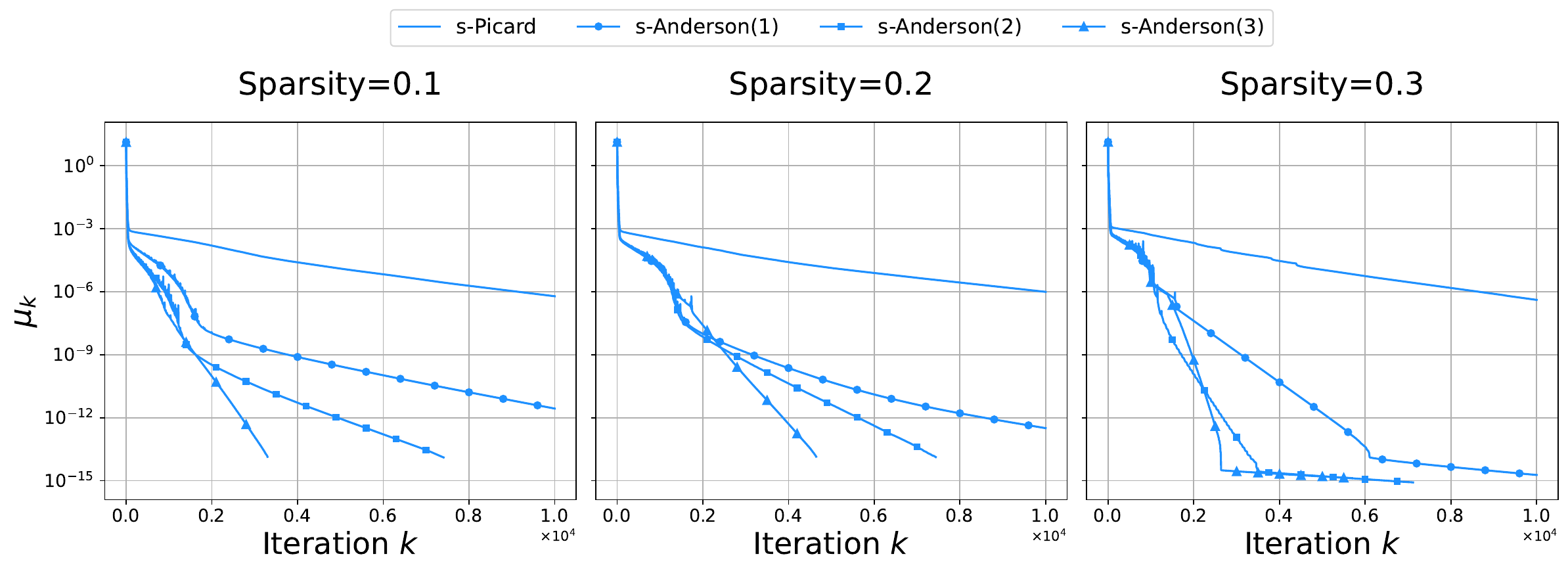}
    \caption{Variation of the smoothing parameters for the ENR problem in different cases}
	\label{fig:enrsmooth}
    \vskip -0.3in
\end{figure}

    \begin{table*}[ht]
    \vskip -0.1in
    \centering
    \caption{Comparisons of the iteration and computational time cost by different algorithms on the ENR problem. Iteration, average per-iteration time (s) and total running time (s) are abbreviated as “$k$",“$t/k$" and “$t$", respectively}
    \label{table:enr}
    \resizebox{\textwidth}{!}{
    \begin{tabular}{l D{/}{/}{5} D{/}{/}{5} >{\columncolor{gray!20}[2pt][0pt]}D{/}{/}{5} D{/}{/}{5} D{/}{/}{5} >{\columncolor{gray!20}[2pt][0pt]}D{/}{/}{5} D{/}{/}{5} D{/}{/}{5} >{\columncolor{gray!20}[2pt][0pt]}D{/}{/}{5}}
        \toprule
        \multirow{1}{*}{{Cost}} & \multicolumn{3}{c}{{Sparsity=0.1}} & \multicolumn{3}{c}{{Sparsity=0.2}} & \multicolumn{3}{c}{{Sparsity=0.3}}\\
        \multicolumn{1}{c}{$({\rm tol} = 10^{-6}/10^{-15})$} & \multicolumn{1}{c}{$k(\times 10^{3})$} & \multicolumn{1}{c}{$t/k(\times 10^{-3})$} & \multicolumn{1}{c}{$t$} & \multicolumn{1}{c}{$k(\times 10^{3})$} & \multicolumn{1}{c}{$t/k(\times 10^{-3})$} & \multicolumn{1}{c}{$t$} & \multicolumn{1}{c}{$k(\times 10^{3})$} & \multicolumn{1}{c}{$t/k(\times 10^{-3})$} & \multicolumn{1}{c}{$t$}\\
        \cmidrule(lr){1-1}\cmidrule(lr){2-4} \cmidrule(lr){5-7}\cmidrule(lr){8-10}
    
     Picard & 10/- & \textbf{0.39}/- & 3.87/- & 9.31/- & \textbf{0.43}/- & 4.04/- & 8.85/- & \textbf{0.46}/- & 4.03/- \\
     Anderson (1) & 5.90/- & 0.60/- & 3.57/- & 5.03/- & 0.71/- & 3.56/- & 4.69/- & 0.74/- & 3.42/-\\
     Anderson(2) & 1.95/8.08 & 0.67/\textbf{0.65} & 1.30/5.26 & 1.78/8.37 & 0.80/\textbf{0.75} & 1.42/6.27 & 1.67/8.73 & 0.89/\textbf{0.75} & 1.48/6.56 \\
     Anderson(3) &1.48/5.71 & 0.70/0.70 & 1.04/3.99 &  1.42/6.60 & 0.77/0.81 & 1.09/5.41 & 1.26/6.43 & 0.79/0.74 & 1.00/\textbf{4.7}4\\
     \hdashline
     s-Picard & 4.97/- & 0.68/- & 3.37/- & 5.04/- & 0.78/- & 3.94/- &  4.90/- & 0.92/- & 4.49/-\\
    s-Anderson(1) & 0.86/- & 0.96/- & 0.82/- & 0.97/- & 1.07/- & 1.03/- & 0.95/7.50 & 1.03/1.04 & \textbf{0.99}/7.81 \\
    s-Anderson(2) & 0.48/7.41 & 1.01/0.99 & 0.48/7.34 & 0.96/7.44 & 1.05/1.06 & \textbf{1.00}/7.94 & 1.07/7.12 & 1.10/1.08 & 1.18/7.70 \\
    s-Anderson(3) & 0.43/3.30 & 1.04/1.04 & \textbf{0.44}/\textbf{3.44} & 0.99/4.65 & 1.07/1.10 & 1.07/\textbf{5.11} & 1.00/5.70 & 1.09/1.08 & 1.10/6.12\\
    \hdashline
    AA1 & 1.07/6.82 & 6.76/6.68 & 7.21/45.57 & \textbf{0.37}/9.48 & 6.71/6.62 & 2.46/65.17 & \textbf{0.35}/- & 7.12/- & 2.48/- \\
    AA1-safe$^1$\footnotetext[1]{The code is available from \url{https://github.com/cvxgrp/nonexp_global_aa1}} &\textbf{0.37}/\textbf{1.05} & 6.94/6.81 & 2.55/7.18 & 0.38/\textbf{0.96} & 6.97/6.84 & 2.67/6.60 & 0.38/\textbf{0.83} & 8.59/8.05 & 3.29/6.67\\
    FISTA & 1.81/- & 3.88/- & 7.02/- & 1.98/- & 2.27/- & 4.50/- & 2.08/-  & 0.88/-  & 1.83/-  \\
    \bottomrule
    \end{tabular}}
    \vskip -0.1in
    \end{table*}
\end{example}

\subsection{{Generalized absolute value equation}}\label{subsec2}
Consider the following generalized absolute value equation (GAVE)
\begin{equation}\label{gave}
    \A\u - \B|\u|=\b,
\end{equation}
where $\A, \B \in \mathbb{R}^{n\times n}$, $\b\in \mathbb{R}^{n}$, and $|\u|$ denotes the vector consisted by the absolute value of each component. GAVE (\ref{gave}) degenerates into absolute value equation (AVE) when $\B = \I$. In 2007, Mangasarian \cite{mangasarian2007absolute} proved that AVE is an NP-hard problem. Then, Prokopyev \cite{prokopyev2009equivalent} proved that testing whether AVE had a unique solution or multiple solutions is also an NP-complete problem. 

We can regard (\ref{gave}) as a fixed point problem as follows:
\begin{equation}\label{gave_fixed}
    \u = G(\u) :=(\I-\A)\u+\B|\u|+\b.
\end{equation}
As for the contraction of $G$, we have
\begin{equation*}
    \begin{aligned}
        \|& G(\u) - G(\v)\| \\
        =& \|(\I-\A)(\u-\v)+\B(|\u| -|\v|)\| \\
        \leq& \|(\I-\A)(\u-\v)\| +\|\B(|\u| -|\v|)\| \\
        \leq& (\|\I-\A\|+\|\B\|)\|\u-\v\|,
    \end{aligned}
\end{equation*}
so if $\|\I-\A\|+\|\B\| <1$, then $G$ in (\ref{gave_fixed}) is a contraction mapping. In particular, if $\A$ is an invertible matrix, then we can get a simpler form, i.e., $G(\u) =\A^{-1}\B|\u|+\A^{-1}\b$, and $G$ is a contraction mapping if $\|\A^{-1}\B\|<1$.

Similarly, by setting $|\u|= \max\{\u, 0\} + \max\{-\u, 0\}$, we can set the smoothing function of $G$ in \cref{gave_fixed} by $\mathcal{G}(\u,\mu) = (\I-\A)\u + \B(\Phi(\u,\mu)+\Phi(-\u,\mu))+\b$, 
where $\Phi(\v,\mu)=(\phi(v_1,\mu),\ldots, \phi(v_n,\mu))^T$ for $\v=(v_1, \ldots, v_n)^T\in\mathbb{R}^n$ and $\phi$ is defined as in \cref{sm1}.

\begin{example}
    A journal bearing consists of a rotating cylinder which is separated from a surface (the bearing) by a thin film of lubricating fluid. Researchers always focus on the properties of this lubricant film, particularly its thickness, pressure distribution, frictional forces and load-carrying capacity. These factors are critical for ensuring optimal performance and longevity of the bearing system. The objective of this example is to find the distribution function of pressure $p$ in the lubricant and $h$ represents the thickness function of the lubricant film. We present an infinitely long journal bearing as a numerical experiment and the mathematical model for this problem can be formulated as follows \cite{cottle2009linear,pinkus1962theory}: 
\begin{equation}\label{org_jourbear}
\begin{aligned}
\frac{d}{d t}\left[h^3(t) \frac{d p}{d t}\right] & =\frac{d h(t)}{d t}, & & 0<t<\tau, \\
p(t) & =0, & & \tau \leq t \leq 2, \\
p(0) & =0, & & \\
\frac{d}{d t} p(t) & =0, & & t= \tau, \\
h(t) &= (1+\epsilon \cos \pi t) / \sqrt{ \pi},
\end{aligned}
\end{equation}
where $\tau \in (0, 2)$ is a constant and $\epsilon \in [0,1)$ is the eccentricity ratio. This model assumes the lubricating film is so thin that pressure variations in the direction orthogonal to the journal's axis can be considered negligible.

By finite difference methods, we can discretize the problem \cref{org_jourbear} and subdivide the interval $[0, 2] $ into $n+1$  subintervals,  each of length  $\Delta t$,
so that $\Delta t = 2/ (n+1)$, and denote $h_i = h(i \Delta t)$, $h_{i-1/2}=h([i-\frac{1}{2}]\Delta t), i= 1, \dots, n$. Let $\bm{P} \in \mathbb{R}^{n+2}$ be the numerical solution of \cref{org_jourbear}. 
Cryer \cite{cryer1971method} proved that the discrete approximation of \cref{org_jourbear} is equivalent to a linear complementarity problem, i.e., find a vector $\p\in \mathbb{R}^n$ such that
\begin{equation}\label{jourlcp}
    \p \geq 0, \quad \A\p-\b \geq 0, \quad \p^T(\A\p-\b)=0
\end{equation}
where
$$
\A=\left(\begin{array}{rcll}
(h_{1+1/2})^3+(h_{1-1/2})^3 & -(h_{1+1/2})^3 &  &\\
-(h_{2-1/2})^3 & (h_{2+1/2})^3+(h_{2-1/2})^3 & -(h_{2+1/2})^3&  \\
   \ddots & \qquad \ddots & \qquad \qquad \ddots&  \\
  &    &   & -(h_{(n-1)+1/2})^3\\
   & & -(h_{n-1/2})^3 & (h_{n+1/2})^3+(h_{n-1/2})^3
   \end{array}
   \right)
$$
and 
$$
\b = \left(-(\Delta t)(h_{1+1/2} - h_{1-1/2}), \dots , -(\Delta t)(h_{n+1/2} - h_{n-1/2}) \right)^T.
$$ 
Additionally, Cryer gave the relationship between $\bm{P}$ and $\p$ as $\bm{P} = [0,\p, 0]^T$. Notice that $\A$ is a symmetric irreducibly diagonally dominant matrix with positive diagonal entries, so that $\A$ is positive definite \cite{varga1962iterative}. 

For the linear complementarity problem \cref{jourlcp}, by setting $\p = |\u|+\u$ and $\A\p-\b= |\u| - \u$, it can be viewed as a fixed point problem as follows \cite{dong2009modified}:
$$\u = G(\u) = (\I+\A)^{-1}(\I-\A)|\u|+(\I+\A)^{-1} \b.$$ 
Also, the smoothing function $\mathcal{G}$ of $G$ can be defined as
\begin{equation}\label{smoothjourlcp}
    \mathcal{G}(\u, \mu) = (\I+\A)^{-1}(\I-\A)(\Phi(\u, \mu)+\Phi(-\u, \mu)) + (\I+\A)^{-1} \b,
\end{equation}
where $\Phi(\v,\mu)$ is defined as previous statements. Since $\A \in \mathbb{R}^{n\times n}$ is a symmetric positive definite matrix, then $G$ is contractive, and the fixed point problem has a unique solution, which can be transformed to the solution of \cref{jourlcp} by setting $\p = |\u|+\u$. 

In this problem, we set ${\rm tol }=10^{-12}$, $k_{\max} = 20000$, $\epsilon=0.4$ and choose the initial point $\u_0 = {\rm 15 * np.random.\newline randn(n, 1)}$. We consider problem \cref{jourlcp} with $n=500$. The first and second subplots in Fig. \ref{fig:jourfull} show the convergence of $\|F_k\|/\|F_0\|$ by Anderson(m) and s-Anderson(m), respectively. The last one indicates that the smoothing parameter is tending towards zero as $k\rightarrow \infty$. Fig. \ref{fig:jourtime} illustrates the convergence performance of Anderson(3) to find the fixed point of $\mathcal{G}(\cdot, \mu)$ with different fixed smoothing parameters and compare them with our algorithm, in which the smoothing parameter is updated. Additionally, the number of iterations by Anderson(m) and s-Anderson(m) to find an $\u_k$ satisfying \cref{stop} are shown in \cref{table:bear}.

\begin{figure}[h]
    \centering
    \includegraphics[width=1\textwidth]{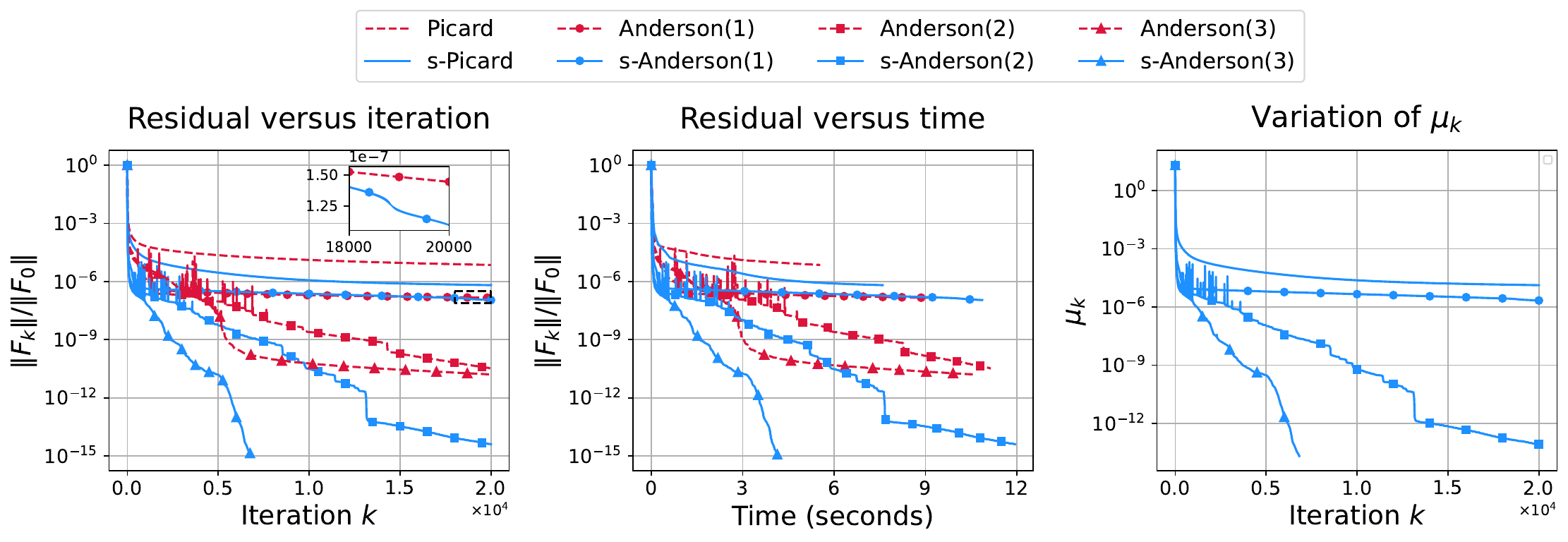}
    \caption{Free boundary problems for infinite journal  bearings with $n=500$}
    \label{fig:jourfull}
    \vskip -0.2in
\end{figure}

\begin{figure}[h]
    \includegraphics[width=1\textwidth]{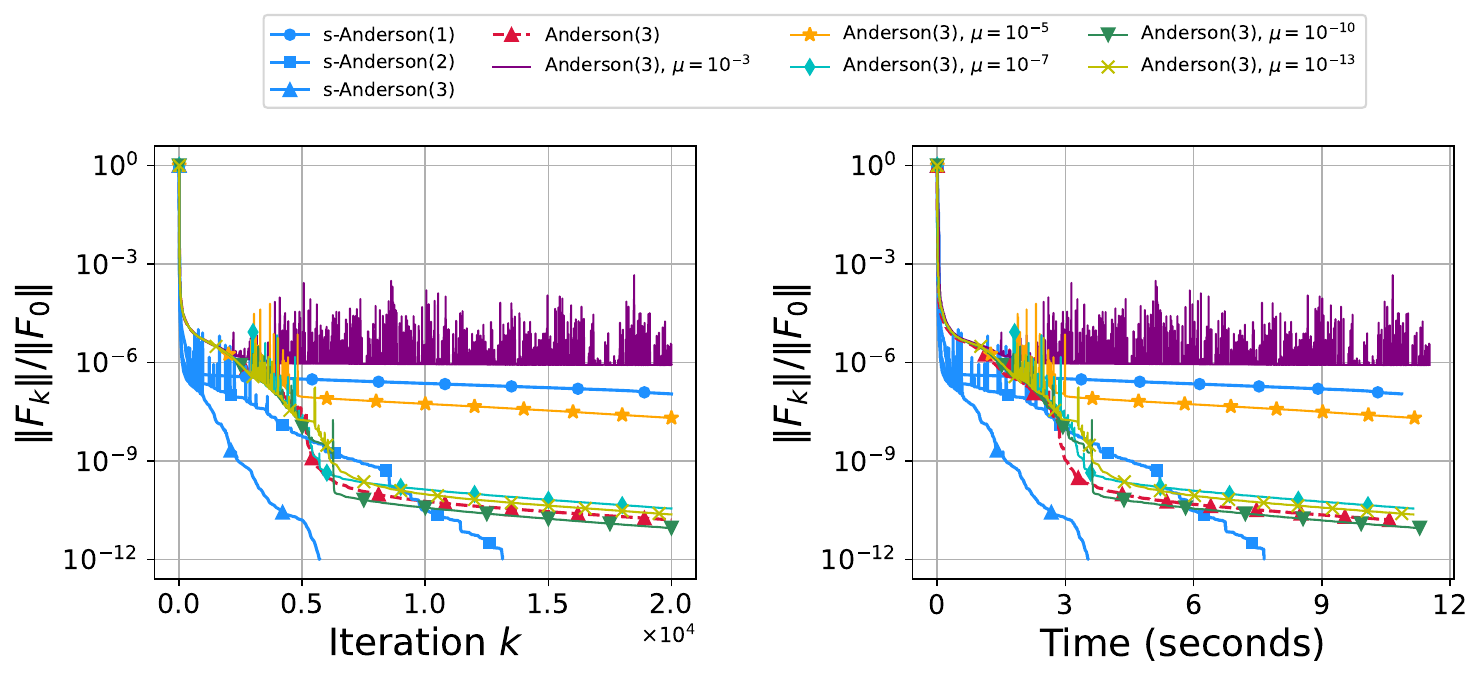}
    \caption{Comparisons of some different algorithms in iterations and {\rm CPU time} for \cref{jourlcp} with $n=500$}
    \label{fig:jourtime}
    \vskip -0.2in
\end{figure}

Based on these numerical results, we gain the following observations:
\begin{itemize}
    \item The local convergence of $\|F_k\|/\|F_0\|$ by s-Anderson(m) is faster as $m$ increases within a certain range. However, an excessively large value of $m$ may lead to fail in finding the solution.
    \item With different values of $n$, s-Anderson(m) exhibits superior numerical performance than Anderson(m), both in terms of the number of iterations and total running time.
    \item When $\mu < 10^{-3}$, Anderson(m) to \cref{smoothjourlcp} with a fixed smoothing parameter can find a proper approximate fixed point of $G$ and $\mu =10^{-10}$ is the best fixed smoothing parameter for Anderson(3) to \cref{smoothjourlcp}. However, s-Anderson(m) with adaptive smoothing parameter is also more effective and faster.
\end{itemize}

\begin{table*}[ht]
        \vskip -0.2in
        \centering
        \caption{Numerical results of Anderson(m) and s-Anderson(m) for different cases}
        \label{table:bear}
        \resizebox{\textwidth}{!}{
        \begin{tabular}{c  D{/}{/}{4} D{/}{/}{6} D{/}{/}{6} D{/}{/}{6} D{/}{/}{6} }
        \toprule
         \multirow{1}{*}{{Method}} & \multicolumn{5}{c}{{Anderson(m)/s-Anderson(m)}}\\
         \cmidrule(lr){2-6} 
         Parameters & \multicolumn{1}{c}{$m=1$} & \multicolumn{1}{c}{$m=2$} &  \multicolumn{1}{c}{$m=3$} &  \multicolumn{1}{c}{$m=5$} &  \multicolumn{1}{c}{$m=9$}\\
         \midrule
         $n=100$  & -/8517 & 8260/\textbf{3458} & 4551/\textbf{2313} & 1564/\textbf{1102} & 1122/\textbf{\underline{860}}\\
         $n=200$ & -/-  & 19330/\textbf{6875} & 6131/\textbf{2927}& 2524/\underline{\textbf{2465}} & 4387/\textbf{7209} \\
         $n=300$  & -/- & 19589/\textbf{8693} & 7611/\textbf{4928} & 4821/\underline{\textbf{3444}} & \textbf{8775}/- \\
         $n=500$  & -/- & -/\textbf{13139} & -/\underline{\textbf{5714}} & \textbf{9486}/- & \textbf{18485}/-  \\
        \bottomrule
        \end{tabular}}
        \end{table*}
\end{example}

\subsection{{Projected gradient descent with non-negative constrains}}
Consider the following  minimization problem with non-negative constraints
\begin{equation}\label{pga}
    \begin{aligned}
        \min \limits_{\u \in \Omega} \, f(\u), 
    \end{aligned}
\end{equation}
where $\Omega := \{\u\in \mathbb{R}^n: \u \geq 0\}$ and $f: \mathbb{R}^n \rightarrow \mathbb{R}$ is $L$-Lipschitz smooth and convex on $\Omega$. We can use the projected gradient descent algorithm to solve (\ref{pga}), i.e.,
\begin{equation}\label{pgda}
    \u_{k+1} = P_{\Omega}(\u_k - \alpha \nabla f(\u_k)),
\end{equation}
where $\alpha \in (0, 2/L)$ is a suitable stepsize. 

Note that $P_{\Omega}(\u) = \max \{\u, 0\}$, by setting $Q:\mathbb{R}^n\rightarrow \mathbb{R}^{n}$ with
$Q(\u) = \u - \alpha \nabla f(\u)$, then the problem (\ref{pga}) can be expressed by the fixed point problem as follows: 
\begin{equation}\label{pga-fixed}
    \u := G(\u) = \max \{Q(\u), 0\}.
\end{equation}
Using $\phi$ in \cref{sm1}, we can set the smoothing function by $\mathcal{G}(\u, \mu) = \Phi(Q(\u),\mu)$, where $\Phi(\v,\mu)=(\phi(v_1,\mu),\ldots,\phi(v_n,\mu))^T$ for $\v=(v_1, \ldots, v_n)^T\in\mathbb{R}^n$.

In terms of the contraction of $G$, which is similar to the discussion in \cref{subsec1}, $G$ in (\ref{pga-fixed}) is a contraction mapping under \cref{str_mon} and $\alpha \in (0, \min\{2/L, 2 \tau/L^2\})$.

\begin{example}
    Consider the following non-negative least squares problem:
\begin{equation}\label{nls}
    \min \limits_{\u \geq 0} \, {\displaystyle  \frac{1}{2 M} \|\A\u-\b\|^2+ \lambda\|\u\|^2}
\end{equation}
where $\A \in \mathbb{R}^{M \times n}$ is the matrix defined by the sample data and $\b \in \mathbb{R}^M$ is the vector denoted by the corresponding labels to the samples. The regularization parameter $\lambda$ is chosen as $\lambda=0.1$. In this experiment, we select two well-known datasets in the field of machine learning, in which the coefficient matrices corresponding to these datasets are ill-conditioned, such as Madelon\footnote[1]{The Madelon dataset is available from \url{https://archive.ics.uci.edu/datasets}} ($\kappa \approx 2.1 \times 10^4$) and MARTI0 \footnote[2]{The MARTI0 dataset is available from \url{https://www.causality.inf.ethz.ch/home.php}} ($\kappa \approx 7.5 \times 10^3$), where $\kappa$ denotes the condition number of the coefficient matrix. We use the projected gradient descent algorithm to solve \cref{nls}, which can be viewed as the Picard iteration to find the following fixed point problem: 
\begin{equation}\label{nls-fixed}
    \u := G(\u) = \max \left \{\u - \alpha \left(\frac{1}{M}\A^T(\A\u -\b)+2\lambda \u\right), 0 \right\}.
\end{equation}
We choose stepsize $\alpha = 1/L$ with $L = \|\A\|^2/M + 2\lambda$, so $G$ in \cref{nls-fixed} is a contractive mapping.

Considering the potential ill-conditioned of the problem, we introduce a regularization term in (\ref{s-anderson-m}) to improve the stability of our algorithm. We select ${\rm tol }=10^{-9}$, $k_{\max} = 2500$, initial point $\u_0 = {\rm 8 * np.random.randn(n, 1)}$ for Madelon and ${\rm tol }=10^{-9}$, $k_{\max} = 5000$, initial point $\u_0 ={\rm 12.5 * np.random.randn(n, 1)}$ for MARTI0, respectively. The convergence performance of $\|F_k\|/\|F_0\|$ is illustrated in Fig. \ref{fig:nls}. For different values of $m$, the convergence rates, defined as $(\|F_k\|/\|F_0\|)^{1/k}$, are listed in \cref{table:nnls}, which indicates that s-Anderson(m) outperforms Anderson(m) for almost all cases and s-Anderson(3) gains the best results. In summary, these results demonstrates the effectiveness of s-Anderson(m) in addressing this class of ill-conditioned problems.

\begin{table*}[ht]
        \vskip -0.2in
        \centering
        \caption{Values of $(\|F_k\|/\|F_0\|)^{1/k}$ by Anderson(m) and s-Anderson(m) for different datasets}
        \label{table:nnls}
        \resizebox{\textwidth}{!}{
        \begin{tabular}{l c c c c }
        \toprule
         \multirow{2}{*}{{Dataset}} & \multicolumn{4}{c}{{Anderson(m)/s-Anderson(m)}} \\
         \cmidrule(lr){2-5} 
           & $m=0$ & $m=1$ & $m=2$ & $m=3$\\
         \midrule
         Madelon & \textbf{9.865e-01}/9.868e-01 & 9.823e-01/\textbf{9.817e-01}& 9.821e-01/\textbf{9.806e-01}&9.812e-01/\underline{\textbf{9.795e-01}} \\
         MARTI0 & 9.949e-01/9.949e-01 & 9.939e-01/\textbf{9.937e-01}& 9.936e-01/\textbf{9.934e-01}& 9.936e-01/\underline{\textbf{9.933e-01}}\\
        \bottomrule
        \end{tabular}}
        \vskip -0.1in
\end{table*}

\begin{figure}[ht]
    \vskip -0.1in
	\centering
	\subfigure[Madelon]{
		\begin{minipage}[b]{0.455\textwidth}
			\includegraphics[width=1\textwidth]{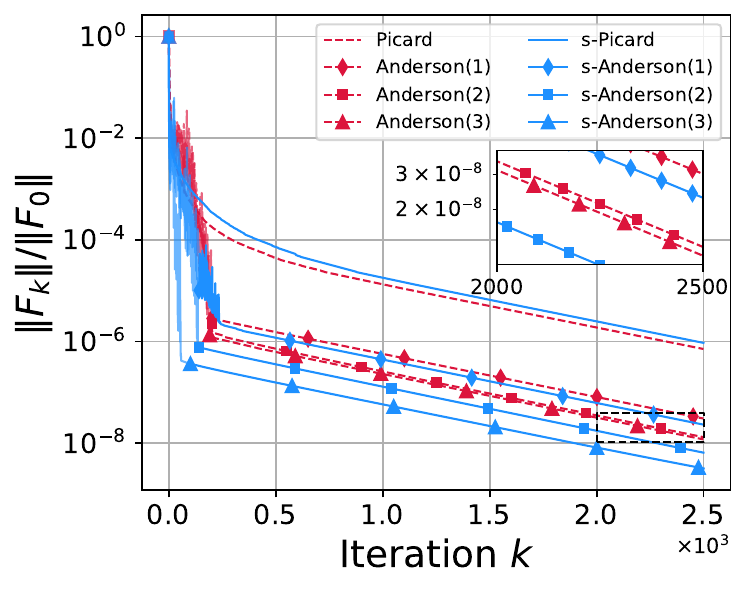}
		\end{minipage}
		\label{fig:madelon}
	}
    	\subfigure[{MARTI0}]{
    		\begin{minipage}[b]{0.455\textwidth}
   		 	\includegraphics[width=1\textwidth]{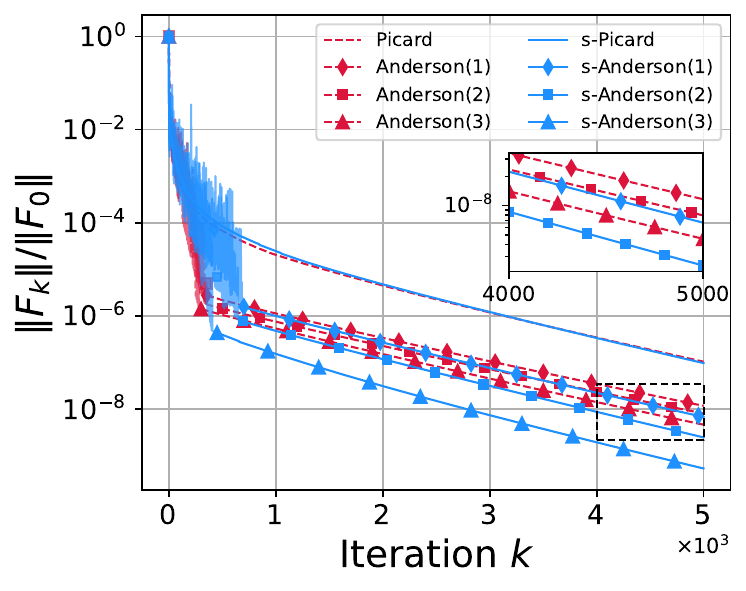}
    		\end{minipage}
		\label{fig:marti0}
    	}
	\caption{Convergence performance of Anderson(m) and s-Anderson(m) for problem \eqref{nls}}
	\label{fig:nls}
    \vskip -0.3in
\end{figure}
\end{example}

\section{Conclusions} 
In this paper, we propose a novel algorithm (named by Smoothing Anderson(m)) combined the Anderson acceleration method with the smoothing approximate function of $G$ for solving the nonsmooth contractive fixed point problem. Different from most existing studies of Anderson acceleration method, we propose a novel Smoothing Anderson(m) algorithm, in which there is a smoothing parameter updated appropriately according to the newest $m_k+1$ residual functions. Based on this setting, we prove the r-linear convergence of the proposed Smoothing Anderson(m) algorithm in \cref{theorem3}, and q-linear convergence of it when $m=1$ in \cref{theorem4-s}. Some numerical experiments are shown to demonstrate the theoretical results of this paper and indicate that our proposed algorithm outperforms most accelerated algorithms from both the number of iterations and the running time of CPU. To the best of our knowledge, most Anderson acceleration algorithms are proved to be local linear convergent or with global convergent but without linear convergence guarantee in theory, but none of them owns both, which gives an interesting topic for the further research.

\bibliographystyle{siamplain}
\bibliography{references}

\end{document}